\newif\ifxetexorluatex
\newtheorem{definition}{Definition}
\newtheorem{theorem}{Theorem}
\newtheorem{lemma}{Lemma}
\newtheorem{proposition}{Proposition}
\def\VR{\kern-\arraycolsep\strut\vrule &\kern-\arraycolsep}
\def\vr{\kern-\arraycolsep & \kern-\arraycolsep}
\newtheorem{remark}{Remark}
\numberwithin{equation}{section}
\newcommand{\sfT}{\mathsf{T}}
\newcommand{\ttX}{\mathtt{X}}
\newcommand{\R}{\mathbb{R}}
\newcommand{\rmq}{\mathsf{q}}
\newcommand{\rD}{\mathrm{D}}
\newcommand{\rN}{\mathrm{N}}
\newcommand{\fL}{\mathfrak{L}}
\newcommand{\rgrad}{\mathsf{rgrad}}
\newcommand{\rgradHf}{\mathsf{rgrad}_{\cH, f}}
\newcommand{\rhess}{\mathsf{rhess}}
\newcommand{\rhessHf}{\mathsf{rhess}_{\cH, f}}
\newcommand{\hatfY}{\hat{f}_{\bY}}
\newcommand{\hatfYY}{\hat{f}_{\bY\bY}}
\newcommand{\hgradf}{\mathsf{grad}\hat{f}}
\newcommand{\hhessf}{\mathsf{hess}\hat{f}}
\newcommand{\rK}{\mathrm{K}}
\newcommand{\sym}[1]{\mathrm{sym}_{#1}}
\newcommand{\fsym}{\mathrm{symf}}
\newcommand{\asym}[1]{\mathrm{skew}_{#1}}
\newcommand{\KK}{\mathbb{K}}
\newcommand{\C}{\mathbb{C}}
\newcommand{\cX}{\mathcal{X}}
\newcommand{\cB}{\mathcal{B}}
\newcommand{\cD}{\mathcal{D}}
\newcommand{\rcD}{\mathring{\mathcal{D}}}
\newcommand{\cE}{\mathcal{E}}
\newcommand{\Herm}[3]{\mathrm{Sym}_{#1, #2, #3}}
\newcommand{\AHerm}[3]{\mathrm{Skew}_{#1, #2, #3}}
\newcommand{\St}[3]{\mathrm{St}_{#1, #2, #3}}
\newcommand{\Sp}[3]{\mathrm{S}^{+}_{#1, #2, #3}}
\newcommand{\Sd}[2]{\mathrm{S}^{+}_{#1, #2}}
\newcommand{\UU}[2]{\mathrm{U}_{#1, #2}}
\newcommand{\cH}{\mathcal{H}}
\newcommand{\xiH}{\xi_{\cH}}
\newcommand{\etaH}{\eta_{\cH}}
\newcommand{\ft}{\mathfrak{t}}
\newcommand{\hatf}{\hat{f}}
\newcommand{\cM}{\mathcal{M}}
\newcommand{\cU}{\mathcal{U}}
\newcommand{\bI}{I}
\newcommand{\bC}{C}
\newcommand{\ba}{a}
\newcommand{\be}{e}
\newcommand{\bx}{x}
\newcommand{\bdh}{\boldsymbol{\hat{d}}}
\newcommand{\bY}{Y}
\newcommand{\sfg}{\mathsf{g}}
\DeclareMathOperator{\diag}{diag}
\DeclareMathOperator{\Tr}{Tr}
\DeclareMathOperator{\xtrace}{xtrace}
\DeclareMathOperator{\TrR}{Tr_{\R}}
\DeclareMathOperator{\Real}{Re}
\DeclareMathOperator{\Null}{Null}
\DeclareMathOperator{\JJ}{J}
\DeclareMathOperator{\Flag}{Flag}
\begin{document}
\title[Riemannian Hessian]{Operator-valued formulas for Riemannian Gradient and Hessian and families of tractable metrics}
%\titlerunning{Operator-valued formulas for Riemannian optimization}
\author{Du Nguyen}
\email{nguyendu@post.harvard.edu}
\begin{abstract}We provide an explicit formula for the Levi-Civita connection and Riemannian Hessian for a Riemannian manifold that is a quotient of a manifold embedded in an inner product space with a non-constant metric function. Together with a classical formula for projection, this allows us to evaluate Riemannian gradient and Hessian for several families of metrics on classical manifolds, including a family of metrics on Stiefel manifolds connecting both the constant and canonical ambient metrics with closed-form geodesics. Using these formulas, we derive Riemannian optimization frameworks on quotients of Stiefel manifolds, including flag manifolds, and a new family of complete quotient metrics on the manifold of positive-semidefinite matrices of fixed rank, considered as a quotient of a product of Stiefel and positive-definite matrix manifold with affine-invariant metrics. The method is procedural, and in many instances, the Riemannian gradient and Hessian formulas could be derived by symbolic calculus. The method extends the list of potential metrics that could be used in manifold optimization and machine learning.
\end{abstract}
%\begin{keywords}
\subjclass{65K10,  58C05,  49Q12,  53C25,  57Z20,  57Z25,  68T05}
\keywords{Optimization,  Riemannian Hessian,  Stiefel,Positive-definite,  Positive-semidefinite, Flag manifold, Machine Learning.}
%\end{keywords}
%\begin{AMS}
%\end{AMS}
\maketitle
\section{Introduction}
In this article, we attempt to address the problem: given a manifold, described by constraint equations and a metric, also defined by an analytic formula, compute the Levi-Civita connection, Riemannian Hessian, and gradient for a function on the manifold. By computing, we mean a procedural, not necessarily closed-form approach. We are looking for a sequence of equations, operators, and expressions to solve and evaluate, rather than starting from a distance minimizing problem. We believe that the approach we take, using a classical formula for projections together with an adaptation of the Christoffel symbol calculation to ambient space addresses the problem effectively for many manifolds encountered in applications. This method provides a very explicit and transparent procedure that we hope will be helpful to researchers in the field. The main feature is it can handle manifolds with non-constant embedded metrics, such as Stiefel manifolds with the canonical metrics, or the manifold of positive-definite matrices with the affine-invariant metrics. The method allows us to compute Riemannian gradients and Hessian for several new families of metrics on manifolds often encountered in applications, including optimization and machine learning. While the effect of changing metrics on first-order optimization methods has been considered previously, we hope this will lead to future works on adapting metrics to second-order methods. The approach is also suitable in the case where the gradient formula is not of closed-form. We also provide a number of useful identities known in special cases.

As an application of the method developed here, we give a short derivation of the gradient and Hessian of a family of metrics studied recently in \cite{ExtCurveStiefel}, extending both the canonical and embedded metric on the {\it Stiefel manifolds with a closed-form geodesics formula}. We derive the Riemannian framework for the induced metrics on quotients of Stiefel manifolds, including {\it flag manifolds}. We also give {\it complete metrics on the fixed-rank positive-semidefinite matrix manifolds}, with efficiently computable gradient, Hessian and geodesics.
\subsection{Background}
In the foundational paper \cite{Edelman_1999}, the authors computed the geodesic equation for a Stiefel manifold $\St{\R}{p}{n}$ of matrices $Y \in \R^{n\times p}$ satisfying $Y^{\mathsf{T}}Y = I_p$, with both the constant metric $\Tr(\eta^{\mathsf{T}}\eta)$ on a tangent vector $\eta$, and the {\it canonical metric} $\Tr(\eta^{\mathsf{T}}(I_p-\frac{1}{2}YY^{\mathsf{T}})\eta)$ using calculus of variation. "Doing so is tedious" (\cite{Edelman_1999}), so the details of the calculations were not included in the paper (\cite{ExtCurveStiefel} recently provided a full derivation). Many examples in the literature usually start with a manifold with a known geodesic equation and construct new manifolds from there. In contrast, we will prove several general formulas for Riemannian gradient, Hessian, and geodesic equations applicable when a subspace of the {\it tangent space} of a manifold is identified as a subspace of a fixed inner product space. This subspace of the tangent space description often arises directly from the description of the manifold by constraint or by quotient formulation. For example, in the Stiefel case, the tangent space at a matrix $Y\in \St{\R}{p}{n}$ is identified with the nullspace of the operator $\JJ(Y):\eta\mapsto Y^{\mathsf{T}}\eta +\eta^{\mathsf{T}}Y$, ($\eta\in\R^{n\times p}$) which follows from the defining equation of the Stiefel manifold. The Grassmann manifold, the quotient of a Stiefel manifold by the right multiplication action of the orthogonal group, has its tangent space identified with the subspace $Y^{\mathsf{T}}\eta=0$ \cite{Edelman_1999}. Given this operator $\JJ(Y)$ and an algebraic formula for the metric, our procedure can produce both the Riemannian gradient and Hessian bypassing calculus of variation, with a relatively short calculation for commonly encountered manifolds. The formulas also suggest a procedural approach to compute the gradient or Hessian when they cannot be reduced to simple expressions. In this approach, the Christoffel symbols are replaced by an operator-valued function called the Christoffel function $\Gamma$ (called $\Gamma_c$ in \cite{Edelman_1999}). In subsequent work, we show the Christoffel function could be used to compute Riemannian curvatures. Thus, this approach has further potentials both in theory and in practice.

Symbolic algebra and differentiation have played a major role in optimization problems arising from machine learning applications in recent years. For manifold applications, it has also been used in \cite{JMLR:v17:16-177,Geomstats}. A simple adoption of symbolic algebra for noncommutative variables is helpful in deriving formulas for projections and connections. While none of our results are dependent on symbolic calculus, it is helpful for sanity check and exploration. The seemingly complicated Christoffel symbols can be handled symbolically due to two facts: first, directional derivatives of matrix expressions can be evaluated rule-based; second, for the trace inner product, index raising is also a symbolic manipulation (e.g., gradients of $\Tr(AxB)$ and $\Tr(Ax^{\mathsf{T}}B)$ with respect to $x$ are simple algebraic expressions). For relatively complex examples with non-constant ambient metrics, we found it could produce the correct gradients and Hessian mostly automatically when applying our procedure. To keep focus, we will not discuss this topic in further detail and only mention that we derive Riemannian frameworks for several manifolds symbolically in several notebooks in our code repository \cite{Nguyen2020riemann}.

Computing the Riemannian connection and the geodesic equation are important steps in understanding the geometry of a manifold in applied problems. We hope this paper provides a step in making this computation more accessible.

\subsection{Riemannian gradient, Hessian and Levi-Civita connection}
First-order approximation of a function $f$ on $\R^n$ relies on the computation of the gradient and the second-order approximation relies on the Hessian matrix or Hessian-vector product. When a function $f$ is defined on a Riemannian manifold $\cM$, the relevant quantities are the Riemannian gradient, a vector field providing first-order approximation for $f$ on the manifold, and Riemannian Hessian, which provides the second-order term.

When a manifold $\cM$ is embedded in an inner product space $\cE$ with the inner product denoted by $\langle,\rangle_{\cE}$, if we have a function $\sfg$ from $\cM$ with values in the set of positive-definite operators operating on $\cE$, we can define an inner product of two vectors $\omega_1, \omega_2\in \cE$ by $\langle \omega_1, \sfg(Y)\omega_2\rangle_{\cE}$ for $Y\in\cM$. This induces an inner product on each tangent space $T_Y\cM$ and hence a Riemannian metric on $\cM$, assuming sufficient smoothness. In this setup, the Riemannian gradient can be computed via a projection from $\cE$ to $T_Y\cM$ (we have the same picture for a horizontal subspace of $T_Y\cM$). In the theory of generalized least squares (GLS) \cite{aitken1936}, it is well-known  that a projection to the nullspace of a full-rank matrix $J$ in an inner product space equipped with a metric $g$ (also represented by a matrix) is given by the formula $I_{\cE}-g^{-1}J^{\mathsf{T}}(Jg^{-1}J^{\mathsf{T}})^{-1}J$ ($I_{\cE}$ is the identity matrix\slash operator of $\cE$). If the tangent space is the nullspace of an operator $\JJ$, and an operator $\sfg$ is used to describe the metric instead of matrices $J$ and $g$, we have a similar formula where the transposed matrix $J^{\mathsf{T}}$ is replaced by the adjoint operator $\JJ^{\ft}$. As mentioned, $\JJ$ is often available explicitly. This projection formula is not often used in the literature, the projection is usually derived directly by minimizing the distance to the tangent space. It turns out when $\JJ$ is given by a matrix equation, $\JJ^{\ft}$ is simple to compute. For manifolds common in applications, $\JJ\sfg^{-1}\JJ^{\ft}$ could often be inverted efficiently, as we will see in several examples. Thus, this will be our main approach to compute the Riemannian gradient.

The Levi-Civita connection of the manifold, which allows us to take covariant derivatives of the gradient, is used to compute the Riemannian Hessian. A vector field $\xi$ in our context could be considered as a $\cE$-valued function from $\cM$, such that $\xi(Y)\in T_Y\cM$ for all $Y\in \cM$. For two vector fields $\xi, \eta$ on $\cM$, the directional derivative $\rD_{\xi}\eta$ is an $\cE$-valued function but generally not a vector field (i.e. not a  $T\cM$-valued function). A covariant derivative \cite{ONeil1983} (or connection) associates a vector field $\nabla_{\xi}\eta$ to two vector fields $\xi, \eta$ on $\cM$. The association is linear in $\xi$, additive in $\eta$ and satisfies the product rule
$$\nabla_{\xi}(f\eta) = f\nabla_{\xi}\eta + (\rD_{\xi}f)\eta$$
for a function $f$ on $\cM$, where $\rD_{\xi}f$ denotes the Lie derivative of $f$ (the directional derivative of $f$ along direction $\xi_x$ at each $x\in\cM$). For a Riemannian metric $\langle,\rangle_R$ on $\cM$, the Levi-Civita connection is the unique connection that is compatible with metric, $\rD_{\xi}\langle \eta, \phi\rangle_R = \langle\nabla_{\xi} \eta,\phi\rangle_R +\langle\eta, \nabla_{\xi}\phi\rangle_R$ ($\phi$ is another vector field), and torsion-free, $\nabla_{\xi}\eta - \nabla_{\eta}\xi = [\xi, \eta]$. If a coordinate chart of $\cM$ is identified with an open subset of $\R^n$ and $\langle,\rangle_R$ is given by a positive-definite operator $\sfg_R$, (i.e. $\langle\xi, \eta\rangle_R = \langle\xi,\sfg_R \eta\rangle_{\R^n}$)
$$\nabla_{\xi}\eta = \rD_{\xi}\eta + \frac{1}{2}\sfg_R^{-1}((\rD_{\xi}\sfg_R)\eta + (\rD_{\eta}\sfg_R)\xi -\cX(\xi, \eta))$$
where $\cX(\xi, \eta)\in\R^n$ (uniquely defined) satisfies $\langle\rD_{\phi}\sfg_R\xi, \eta\rangle_{\R^n} = \langle \phi, \cX(\xi, \eta)\rangle_{\R^n}$ for all vector field $\phi$. The formula is valid for each coordinate chart, and it is often given in terms of Christoffel symbols in index notation (\cite{ONeil1983}, proposition 3.13). We will generalize this operator formula. The gradient and $\cX$ are examples of index-raising, translating from a (multi)linear scalar function $f$ to a (multi)linear vector-valued function of one less variable, that evaluates back to $f$ using the inner product pairing.

The Riemannian Hessian could be provided in two formats, as a bilinear form $\rhess_f^{02}(\xi, \eta)$, returning a scalar function to every two vector fields $\xi, \eta$ on the manifold, or a (Riemannian) Hessian vector product $\rhess^{11}_f\xi$, an operator returning a vector field given a vector field input $\xi$. In optimization, as we need to invert the Hessian in second-order methods, the Riemannian Hessian vector product form $\rhess_f^{11}$ is more practical. However, $\rhess^{02}_f$ is directly related to the Levi-Civita connection (see \cref{eq:rhess02} below), and can be read from the geodesic equation: In \cite{Edelman_1999}, the authors showed the geodesic equation (for a Stiefel manifold) is given by $\ddot{Y} + \Gamma(\dot{Y}, \dot{Y})=0$ where the Christoffel function $\Gamma$ (defined below) maps two vector fields to an ambient function and the bilinear form $\rhess^{02}_f$ is $\hatfYY(\xi, \eta) - \langle\Gamma(\xi, \eta), \hatfY\rangle_{\cE}$. Here, $\hatfY$ and $\hatfYY$ are the ambient gradient and Hessian defined in \cref{sec:ambient}. We will provide an explicit formula for $\Gamma$.

\subsection{Riemannian optimization}
The reader can consult \cite{Edelman_1999,AMS_book} for details of Riemannian optimization, including the basic algorithms once the Euclidean and Riemannian gradient and Hessian are computed. In essence, it has been recognized that many popular equation solving and optimization algorithms on Euclidean spaces can be extended to a manifold framework (\cite{Gabay1982,Edelman_1999}). Many first-order algorithms (steepest descent, conjugate gradient) on real vector spaces could be extended to manifolds using the Riemannian gradient defined above together with a {\it retraction} (some algorithm, for example, conjugate-gradient, requires more differential-geometric measures). Also, using the Riemannian Hessian, second-order optimization methods, for example, Trust-Region (\cite{AMS_book}), could be extended to manifold context. At the $i$-th iteration step, an optimization algorithm produces a tangent vector $\eta_{i}$ to the manifold point $Y_i$, which will produce the next iteration point $Y_{i+1}$ via a retraction (\cite{AdlerShub}, chapter 4 of \cite{AMS_book}). For manifolds considered in this article, effective retractions are available.
\iffalse
If $Y(t)$ is a geodesic curve starting at a point $Y$ in direction $\eta$, then $Y(1)$ may be computed by Taylor series expansion if we can prove convergence, and $Y{0} =Y, \dot{Y}(0}=\eta and \ddot{Y}(0)=-\Gamma(\eta, \eta)$ give the first three terms of the expansion. Higher order points could also be computed by taking time derivatives of the geodesic equation. Thus, in theory we may not need an explicit solution of the geodesic equation to use geodesic transport, however at first look we need the iteration steps be to small enough for the approximation to produce a point close enough to the manifold, and this reduces the effectiveness of the algorithm.
  \fi
  \subsection{Notations}\label{sec:notation} We will attempt to state and prove statements for both the real and Hermitian cases at the same time when there are parallel results, as discussed in \cref{subsec:matrix}. The base field $\KK$ will be $\R$ or $\C$. We use the notation $\KK^{n\times m}$ to denote the space of matrices of size $n\times m$ over $\KK$. We consider both real and complex vector spaces as real vector spaces, and by $\TrR$ we denote the trace of a matrix in the real case or the real part of the trace in the complex case. A real matrix space is a real inner product space with the Frobenius inner product $\TrR ab^{\mathsf{T}}$, while a complex matrix space becomes a real inner product space with inner product $\TrR ab^{\mathsf{H}}$ (see \cref{subsec:matrix} for details). We will use the notation $\ft$ to denote the real adjoint $\mathsf{T}$ for a real vector space, and Hermitian adjoint $\mathsf{H}$ for a complex vector space, both for matrices and operators. We denote $\sym{\ft}X = \frac{1}{2}(X + X^{\ft})$, $\asym{\ft}X = \frac{1}{2}(X - X^{\ft})$. We denote by $\Herm{\ft}{\KK}{n}$ the space of $\ft$-symmetric matrices $X\in \KK^{n\times n}$ with $X^{\ft} = X$. The $\ft$-antisymmetric space $\AHerm{\ft}{\KK}{n}$ is defined similarly. Symbols $\xi, \eta$ are often used to denote tangent vector or vector fields, while $\omega$ is used to denote a vector on the ambient space. The directional derivative in direction $v$ is denoted by $\rD_{v}$, it applies to scalar, vector, or operator-valued functions. If $\ttX$ is a vector field and $f$ is a function, the Lie derivatives will be written as $\rD_{\ttX}f$. We also apply Lie derivatives on scalar or operator-valued functions when $\ttX$ is a vector field, and write $\rD_{\ttX}\sfg$ for example, where $\sfg$ is a metric operator. Because the vector field $\ttX$ may be a matrix, we prefer the notation $\rD_{\ttX}\sfg$ to the usual Lie derivative notation $\ttX\sfg$ which may be ambiguous. By $\UU{\KK}{d}$ we denote the group of $\KK^{d\times d}$ matrices $U$ satisfying $U^{\ft}U = I_d$ (called $\ft$-orthogonal), thus $\UU{\KK}{d}$ is the real orthogonal group $\mathrm{O}(d)$ when $\KK=\R$ and unitary group $\mathrm{U}(d)$ when $\KK=\C$.

  In our approach, a subspace $\cH_Y$ of the tangent space at a point $Y$ on a manifold $\cM$ is defined as either the nullspace of an operator $\JJ(Y)$, or the range of an operator $\rN(Y)$, both are operator-valued functions on $\cM$. Since we most often work with one manifold point $Y$ at a time, we sometimes drop the symbol $Y$ to make the expressions less lengthy. Other operator-valued functions defined in this paper include the ambient metric $\sfg$, the projection $\Pi_{\cH,\sfg}$ to $\cH$, the Christoffel metric term $\rK$, and their directional derivatives. We also use the symbols $\hatfY$ and $\hatfYY$ to denote the ambient gradient and Hessian ($Y$ is the manifold variable). We summarize below symbols and concepts related to the main ideas in the paper, with the Stiefel case as an example (details explained in sections below).
  \begin{center}
    \begin{tabular}{ c l}
      \hline
      Symbol & Concept \\
      \hline
      $\cE$  & ambient space, $\cM$ is embedded in $\cE$. (e.g. $\KK^{n\times p}$)\\
      $\cE_{\JJ}, \cE_{\rN}$  & inner product spaces, range of $\JJ(Y)$ and domain of $\rN(Y)$ below.\\
      $\cH$ & A subbundle of $T\cM$. Either $T\cM$ or a horizontal bundle in practice.\\

 $\JJ(Y)$ & operator from $\cE$ {\it onto} $\cE_{\JJ}$, $\Null(\JJ(Y))=\cH_Y\subset T_Y\cM$. (e.g. $Y^{\ft}\omega + \omega^{\ft}Y$) \\ 
      $\rN(Y)$ & inject. oper. from $\cE_{\rN}$ to $\cE$ onto $\cH_Y\subset T_Y\cM$ (e.g. $\rN(A, B) = Y A + Y_{\perp} B$) \\
      $\xtrace$ & index-raising operator for the trace (Frobenius) inner product\\
      $\sfg(Y)$ & metric given as self-adjoint operator on $\cE$ (e.g. ($\alpha_1YY^{\ft} + \alpha_0 Y_{\perp}Y_{\perp}^{\ft})\eta$) \\
      $\Pi_{\cH,\sfg}$ & projection to $\cH\subset T\cM$ in \cref{prop:NJ}.\\
      $\rK(\xi, \eta)$ & Chrisfl. metric term  $\frac{1}{2}((\rD_{\xi}\sfg)\eta + (\rD_{\eta})\sfg\xi-\xtrace(\langle(\rD_\phi\sfg)\xi, \eta\rangle_{\cE}, \phi))$ \\
      $\Gamma_{\cH}(\xi, \eta)$ & Chrisfl. function $\Pi_{\cH,\sfg}\sfg^{-1}\rK(\xi, \eta)-(\rD_{\xi}\Pi_{\cH, \sfg})\eta$
\end{tabular}
  \end{center}
\subsection{Our contribution}\label{sec:contribution}
We formally define a concept of ambient space with a metric operator to make precise the conditions for our approach. Under these conditions, given the operator-valued function $\JJ$ defining a subbundle $\cH$ of the tangent bundle (in practice, $\cH$ is the full tangent bundle for an embedded manifold or the horizontal subbundle corresponding to a quotient manifold) and an operator-valued function $\sfg$ describing the metric, if $\hatfY$ is the ambient (Euclidean) gradient, we can use the formula $\sfg^{-1}\hatfY-\sfg^{-1}\JJ^{\ft}(\JJ\sfg\JJ^{\ft})^{-1}\JJ\sfg^{-1}\hatfY$ to evaluate the Riemannian gradient. We also provide explicit formulas for the Levi-Civita connection \cref{eq:useGamma} and Riemannian Hessian \cref{eq:rhess11}. Among the results, in case $\cH= T\cM$, identifying the tangent space of a manifold $\cM$ with a subspace of an ambient space $\cE$, the Riemannian Hessian product of a function $f$ for a vector field $\xi$ is given by:
$$\rhess_f^{11}\xi =\Pi_{\sfg}\sfg^{-1}(\hatfYY\xi + \sfg(\rD_{\xi}\Pi_{\sfg})(\sfg^{-1}\hatfY)
-(\rD_{\xi}\sfg)(\sfg^{-1}\hatfY)+\rK(\xi, \Pi_{\sfg}\sfg^{-1}\hatfY))$$
Here, $\Pi_{\sfg}$ is the projection from the ambient space to the tangent space, $\hatfY$ and $\hatfYY$ are the ambient gradient and Hessian (will be defined precisely below) and $\rK$ is the Christoffel metric term. The identification of the tangent space of a manifold $\cM$ with a subspace of $\cE$ makes $\Pi_{\sfg}$ and $\sfg$ operator-valued functions on $\cM$, thus their directional derivatives are well-defined. So $\rD_{\xi}\sfg, \rD_{\xi}\Pi_{\sfg}$ and $\rK$ are all globally defined operator-valued functions. In general, they can be computed from analytic expressions for the metric and the defining equations for the subbundle $\cH$. For the first-order case, we also provide a formula for the gradient when the fiber $\cH_Y$ of $\cH$ at $Y$ is described as the range of a one-to-one map $\rN(Y)$ at each $Y$. The two ways to describe the subspace $\cH_Y$, as the range of $\rN$ or as the nullspace of $\JJ$, gives us two expressions for the gradient (hence the Hessian) that can be evaluated by linear operator algebra.

The difference with the traditional embedded manifold approach (\cite{AMS_book}, Proposition 5.32, or \cite{Mishra2014} for a non-constant metric example), is in those cases, a Riemannian metric {\it for the ambient space} is provided, while we only define the operator-valued metric function for points {\it on the manifold} in our approach. Therefore, we can use a formula such as $\TrR\eta^{\ft}(I-\frac{1}{2}YY^{\ft})\eta$ for a metric on a Stiefel manifold, while that formula is not a metric for an arbitrary matrix $Y$. To use the traditional approach for a nonconstant metric, we need to extend the metric to an ambient manifold, then project the ambient connection back to the embedded manifold. There are many ways to extend the metric, but they all produce the same connection on the embedded manifold, thus, it is plausible that there is a connection formula involving only the metric operator. We provide this formula. The approach could be extended to quotients of embedded manifolds easily. Differentiating the metric operator is straightforward for classical manifolds, while extending the metric by an analytic formula to a large ambient space shifts the problem to evaluating the ambient connection, which is may also be complicated (especially if we want to use a metric formula that works on the full ambient space. In contrast, for a Stiefel manifold, our approach essentially extends the formula $\TrR\eta^{\ft}(I-\frac{1}{2}YY^{\ft})\eta$ to an ambient neighborhood of the manifold but not the full ambient space). We demonstrate this approach in several examples:
\begin{itemize}
\item[1.] For a Stiefel manifold $\St{\KK}{d}{n}$ and two positive numbers $\alpha_0, \alpha_1$, we consider a family of metrics of the form $\TrR(\alpha_0\eta^{\ft}\eta +(\alpha_1-\alpha_0)\eta^{\ft}YY^{\ft}\eta)$ for a tangent vector  $\eta$ at $Y\in \St{\KK}{d}{n}$, generalizing both the constant ambient metric (with $\alpha_0=\alpha_1=1$) and the canonical metric ($\alpha_0=2\alpha_1 = 1$) defined in \cite{Edelman_1999}. These metrics have been studied recently in \cite{ExtCurveStiefel}, with the geodesic equation derived by calculus of variation. We show its gradient and Hessian could also be derived easily in our framework. In \cite{NguyenGeodesic}, we give computationally efficient geodesic formulas generalizing those in \cite{Edelman_1999}, and propose an algorithm for the geodesic distance of these metrics.
\item[2.]  We show it is simple to obtain a Riemannian optimization framework for quotients of a Stiefel manifold $\St{\KK}{d}{n}$ by a group of isometries. We consider the case where this group is a block-diagonal group. This case includes the flag manifolds \cite{Nishi,YeLim}. We treat the full family of metrics in \cite{ExtCurveStiefel}. The formulas obtained have the same form as those of Stiefel manifolds, and we can provide the Riemannian Hessian operator and second-order methods, an improvement over previous works. In \cite{NguyenGeodesic} we also propose an algorithm for geodesic distance on flag manifolds.
\item[3.] We apply our formula to the manifold $\Sd{\KK}{n}$ of positive-definite matrices in $\KK^{n\times n}$ with the affine-invariant metrics. We recover the known expressions for Riemannian gradient and Hessian (see \cref{sec:related} for related works).
\item[4.] We consider the quotient metric for the manifold of positive-semidefinite matrices of rank $d$ in $\KK^{n\times n}$, $\Sp{\KK}{d}{n}$, considered as $(\St{\KK}{d}{n}\times \Sd{\KK}{d})/\UU{\KK}{d}$, the quotient by an orthogonal group of the product of a Stiefel manifold equipped with a metric in the family introduced above and the positive-definite matrix manifold with the affine-invariant metrics. We show the Riemannian gradient and Hessian could be computed with the help of a Lyapunov-type equation \cref{eq:eq_lyapunov} (solvable at cost $O(d^3)$ adapting the Bartels-Stewart algorithm). The affine-invariant behavior of the $\Sd{\KK}{d}$ part distinguishes this metric from others considered in the literature. While it shares some features with a metric considered in \cite{BonSel}, this metric comes from a Riemannian submersion, allowing us to compute the Hessian by our framework. With this metric, $\Sp{\KK}{d}{n}$ is a {\it complete} manifold with known geodesics. It could be shown (not presented here) an approximate geodesic considered in \cite{BonSel} could be considered as a limiting geodesic, where our Riemannian metric becomes degenerate.
\item[5.] We implement the operators in our approach in {\it Python} that can be used with a manifold optimization package, extending the base class Manifold in the package {\it Pymanopt} (based on {\it Manopt}, \cite{JMLR:v17:16-177}, \cite{manopt}), see \cite{Nguyen2020riemann}. Note that our implementation can take metric parameters, while most implementations typically handle one metric per module. With our approach, it is simple to test the torsion-free and metric-compatibility properties of the Levi-Civita connection numerically as the derivatives involved are of operator-valued functions. We tested the implementation with the Trust-Region algorithm for a few benchmark problems.
\end{itemize}
Our analysis offers two theoretical insights in deciding potential metric candidates in optimization problems:
\begin{itemize}
\item[1.] Non-constant ambient metrics may have the same big-$O$ time picture as the constant one. This is the case with the examples above when the constraint and the metrics are given in matrix polynomials or inversion. If the ambient Hessian could be computed efficiently, in many cases the Riemannian Hessian expressions (maybe tedious algebraically), could be computed by operator composition with the same order-of-magnitude time complexity as the Riemannian gradient. This suggests non-constant metrics may be competitive if the improvement in convergence rate is significant. For certain problems involving positive-definite matrices, a non-constant metric is a better option(\cite{Sra}).
\item[2.] There is a theoretical bound for the cost of computing the gradient, assuming that the metric $\sfg$ is easy to invert. If the complexity of computing $\sfg$ and $\JJ$ is known, it remains to estimate the cost of inverting $\JJ\sfg^{-1}\JJ^{\ft}$ (or $\rN^{\ft}\sfg\rN$). While in our examples these operators are reduced to simple ones that could be inverted efficiently, otherwise, $\JJ\sfg^{-1}\JJ^{\ft}$ could be solved by a conjugate gradient (CG)-method (which has been used for Riemannian optimization, see \cite{Kasai16}). In that case, the time cost is proportional to the rank of $\JJ$ (or $\rN))$ times the cost of each CG step, which can be estimated depending on the problem.
\end{itemize}
\subsection{Related works and outline}\label{sec:related}
The formulas for the Hessian (\cref{prop:covar}) have mostly been used with a constant metric on an ambient manifold. For example, formula (7) of \cite{AMT} is the special case for our Hessian formula for constant ambient metrics, or section 4.9 of \cite{Edelman_1999} also discusses the Christoffel function for an embedded manifold. As discussed, we provide a formalism allowing us to compute this function via operators on ambient space. The paper \cite{Manton} gave the original treatment of the Hessian for the unitary/complex case. The formulas for Stiefel manifolds overlap with those in \cite{Edelman_1999,ExtCurveStiefel}, obtained by different methods. Optimization on flag manifolds was studied in \cite{Nishi,YeLim}. The affine-invariant metric on positive-definite matrices was also widely studied, for example in \cite{Smith_2005,Pennec,Sra,JeurisVal}. There are numerous metrics on the fixed-rank positive-semidefinite (PSD) manifolds, we mentioned \cite{BonSel} that motivated our approach. Although working with the same product of Stiefel and positive-definite manifolds with the affine-invariant metric, that paper did not use the Riemannian submersion metric on the quotient and focused on first-order methods. We compute the Levi-Civita connection for second-order methods. In \cite{VAV,JBAS}, two different families of metrics on PSD manifolds are studied. They both require solving Lyapunov equations but have different behaviors on the positive-definite part. Articles \cite{Manton,SepulchreMishra} discuss the effect of adapting metrics to optimization problems (\cite{SepulchreMishra} adapts ambient metrics to the objective function using first-order methods.)

In the next section, we formulate and prove the main theoretical results of the paper. We then identify the adjoints of common operators on matrix spaces. We apply the theory developed to the manifolds discussed above. We then discuss numerical results and implementation. We conclude with a discussion of future directions.

\section{Ambient space and optimization on Riemannian manifolds}\label{sec:ambient}
While an abstract manifold is defined in terms of coordinate charts, manifolds appearing in applications are usually subspaces of an inner product space $\cE$ with inner product $\langle,\rangle_{\cE}$, or quotient space of such subspaces, defined by constraints and symmetries (\cite{Edelman_1999,AMS_book}). For a manifold $\cM$ embedded in $\cE$ differentiably, its tangent spaces are also identified with subspaces of $\cE$, the identification is parametrized smoothly by points of $\cM$. For example, if $\cM$ is a unit sphere in $\cE=\R^m$, the tangent space at a point $\bx$ is the subspace of vectors $\eta$ such that $\bx^{\mathsf{T}}\eta=0$. In the quotient manifold situation (reviewed below), for $Y\in\cM$, we focus on a subspace $\cH_Y$ of $T_Y\cM$, the horizontal subspace with respect to a group action. The space $\cH_Y$ is also considered as a subspace of $\cE$, allowing us to treat quotient manifolds in the same setting.

If $\hat{f}$ is a scalar function from an open subset $\cU$ of $\cE$, its gradient $\hgradf$ satisfies $\langle\hat{\eta}, \hgradf\rangle_{\cE} = \rD_{\hat{\eta}} \hat{f}$ for all vector fields $\hat{\eta}$ on $\cU$ where $\rD_{\hat{\eta}} \hatf$ is the Lie derivative of $\hatf$ with respect to $\hat{\eta}$. As well-known \cite{Edelman_1999,AMS_book}, the Riemannian gradient and Hessian product of a function $f$ on $\cM$ could be computed from the Euclidean gradient and Hessian, which are evaluated by extending $f$ to a function $\hat{f}$ on a region of $\cE$ near $\cM$. The process is independent of the extension $\hat{f}$.

\begin{definition}\label{def:ambient} We call an inner product (Euclidean) space $(\cE, \langle,\rangle_{\cE})$  an embedded ambient space of a Riemannian manifold $\cM$ if there is a differentiable (not necessarily Riemannian) embedding $\cM\subset\cE$.

  Let $f$ be a function on $\cM$ and $\hat{f}$ be an extension of $f$ to an open neighborhood of $\cE$ containing $\cM$. We call $\hgradf$ an ambient gradient of $f$. It is a vector-valued function from $\cM$ to $\cE$ such that for all vector fields $\eta$ on $\cM$
\begin{equation}
    \label{eq:amb}
     \langle\eta(Y), \hgradf(Y)\rangle_{\cE} = (\rD_{\eta(Y)} f)(Y)\text{ for all }Y\in \cM
\end{equation}
or equivalently $\langle\eta, \hgradf\rangle_{\cE} = \rD_\eta f$. Given an ambient gradient $\hgradf$ with continuous derivatives, we define the ambient Hessian to be the map $\hhessf$ associating to a vector field $\xi$ on $\cM$ the derivative $\rD_{\xi}\hgradf$. We define the ambient Hessian bilinear form $\hhessf^{02}(\xi, \eta)$ to be $\langle(\rD_{\xi}\hgradf),\eta\rangle_{\cE}$. If $Y\in\cM$ is considered as a variable, we also use the notation $\hatfY$ for $\hgradf$ and $\hatfYY$ for $\hhessf$.
\end{definition}
By the Whitney embedding theorem, any manifold has an ambient space. Coordinate charts could be considered as a collection of compatible local ambient spaces. However, globally defined ambient spaces are more effective in computations.

From the embedding $\cM\subset\cE$, the tangent space of $\cM$ at each point $Y\in\cM$ is considered as a subspace of $\cE$. Thus, a vector field on $\cM$ could be considered as an $\cE$-valued function on $\cM$ and we can take its directional derivatives. This derivative is dependent on the embedding and hence not intrinsic.
\begin{lemma}
For a function $f$ and two vector fields $\xi, \eta$ on $\cM\subset\cE$ we have:
\begin{equation}
\label{eq:ehess}
\hatfYY^{02}(\xi, \eta)= \hhessf^{02}(\xi, \eta) = \rD_{\xi}(\rD_{\eta} f)- \langle \rD_{\xi}\eta, \hatfY\rangle_{\cE}
\end{equation}
\end{lemma}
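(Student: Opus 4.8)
The claim is the identity
\[
\hatfYY^{02}(\xi,\eta) = \hhessf^{02}(\xi,\eta) = \rD_{\xi}(\rD_{\eta}f) - \langle \rD_{\xi}\eta, \hatfY\rangle_{\cE}.
\]
The first equality is just the notational convention from \cref{def:ambient} ($\hatfYY$ is another name for $\hhessf$, and the bilinear-form superscript $(02)$ is defined there), so there is nothing to prove there. The content is the second equality, and the plan is to start from the \emph{definitions} of both sides and reconcile them using the product (Leibniz) rule for the directional derivative on $\cE$-valued functions, together with the defining property \cref{eq:amb} of the ambient gradient.

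First I would write out the left-hand side from the definition: $\hhessf^{02}(\xi,\eta) = \langle \rD_{\xi}\hgradf, \eta\rangle_{\cE}$. Since $\xi$ and $\eta$ are vector fields along $\cM$ viewed as $\cE$-valued functions, and $\hgradf$ is an $\cE$-valued function on $\cM$, the map $Y \mapsto \langle \hgradf(Y), \eta(Y)\rangle_{\cE}$ is a scalar function on $\cM$, and by the product rule for $\rD_{\xi}$ applied to the bilinear pairing $\langle\,,\,\rangle_{\cE}$ (which is bilinear with constant coefficients, hence differentiable termwise), we get
\[
\rD_{\xi}\big(\langle \hgradf, \eta\rangle_{\cE}\big) = \langle \rD_{\xi}\hgradf, \eta\rangle_{\cE} + \langle \hgradf, \rD_{\xi}\eta\rangle_{\cE}.
\]
Rearranging, $\hhessf^{02}(\xi,\eta) = \rD_{\xi}\big(\langle \hgradf, \eta\rangle_{\cE}\big) - \langle \hgradf, \rD_{\xi}\eta\rangle_{\cE}$. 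Now I apply \cref{eq:amb}: $\langle \hgradf, \eta\rangle_{\cE} = \rD_{\eta}f$ as scalar functions on $\cM$, so the first term becomes $\rD_{\xi}(\rD_{\eta}f)$; and $\langle \hgradf, \rD_{\xi}\eta\rangle_{\cE} = \langle \rD_{\xi}\eta, \hatfY\rangle_{\cE}$ by symmetry of the real inner product and the identification $\hatfY = \hgradf$. This yields exactly the claimed formula.

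The one subtlety worth a remark is that $\rD_{\xi}\eta$ need not be a vector field on $\cM$ (it generally has a component normal to $T_Y\cM$, as the excerpt emphasizes), so the pairing $\langle \rD_{\xi}\eta, \hatfY\rangle_{\cE}$ is genuinely an ambient inner product; and likewise $\hgradf$ itself depends on the chosen extension $\hatf$, but only its restriction to $\cM$ and its pairings against \emph{tangent} vectors enter, so the left side $\hhessf^{02}(\xi,\eta)$ could a priori depend on the extension through $\rD_{\xi}\hgradf$. The identity just derived shows it does \emph{not}: the right-hand side $\rD_{\xi}(\rD_{\eta}f) - \langle \rD_{\xi}\eta, \hatfY\rangle_{\cE}$ only involves $f$ on $\cM$ and the ambient gradient paired against a tangent-adjacent vector $\rD_{\xi}\eta$ — but note $\langle \rD_{\xi}\eta, \hatfY\rangle_{\cE}$ still sees $\hatfY$, not merely its tangential projection, so the genuine extension-independence statement (that $\hhessf^{02}$ restricted appropriately is intrinsic up to the correction term) is really a consequence one reads \emph{off} this formula rather than an input to it. I do not expect any real obstacle here: the whole proof is one application of the Leibniz rule plus \cref{eq:amb}, and the only care needed is bookkeeping about which objects are scalar-, vector-, or operator-valued and that $\rD_{\xi}$ commutes with nothing in particular but does obey the product rule over the fixed bilinear form $\langle\,,\,\rangle_{\cE}$.
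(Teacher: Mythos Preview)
Your proof is correct and matches the paper's argument exactly: both apply the Leibniz rule to $\rD_{\xi}\langle \hatfY, \eta\rangle_{\cE}$ and then invoke \cref{eq:amb} to identify $\langle \hatfY, \eta\rangle_{\cE}$ with $\rD_{\eta}f$. Your additional remarks on extension-independence are extra commentary but consistent with the paper's framing.
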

\begin{proof} We have $\rD_{\xi}\langle\eta, \hatfY\rangle_{\cE} =
  \langle\rD_{\xi}\eta, \hatfY\rangle_{\cE} + \langle\eta, \rD_{\xi}(\hatfY)\rangle_{\cE} = \rD_{\xi}(\rD_{\eta} f)$ by taking directional derivatives of \cref{eq:amb}, thus $\langle\eta, \rD_{\xi}(\hatfY)\rangle_{\cE}$ can be evaluated by \cref{eq:ehess}.\qed
\end{proof}

We begin with a standard result of inner product spaces. Recall that the adjoint of a linear map $A$ between two inner product spaces $V$ and $W$ is the map $A^{\ft}$ such that $\langle Av, w\rangle_W = \langle v, A^{\ft}w\rangle_V$ where $\langle,\rangle_V, \langle,\rangle_W$ denote the inner products on $V$ and $W$, respectively. If $A$ is represented by a matrix also called $A$ in two orthogonal bases in $V$ and $W$ respectively, then $A^{\ft}$ is represented by its transpose $A^{\mathsf{T}}$. A projection from an inner product space $V$ to a subspace $W$ is a linear map $\Pi_W$ such that $\langle v, w\rangle_V = \langle \Pi_W v, w\rangle_V$ for all $w\in W$, $v\in V$. It is well-known a projection always exists and unique, and $\Pi_W v$ minimizes the distance from $v$ to $W$.
\begin{proposition}\label{prop:JN}
  Let $\cE$ be a vector space with an inner product $\langle,\rangle_{\cE}$. Let $\sfg$ be a self-adjoint positive-definite operator on $\cE$, thus $\langle\sfg\be_1, \be_2\rangle_{\cE}= \langle\be_1, \sfg \be_2\rangle_{\cE}$. The operator $\sfg$ defines a new inner product on $\cE$ by $\langle\be_1, \be_2\rangle_{\cE, \sfg}:= \langle\be_1, \sfg \be_2\rangle_{\cE}$. If $W = \Null(\JJ)$ for a map $\JJ$ from $\cE$ onto an inner product space $\cE_{\JJ}$, the projection $\Pi_{\sfg}=\Pi_{\sfg, W}$ from $\cE$ to $W$  under the inner product $\langle, \rangle_{\cE, \sfg}$ is given by $\Pi_{\sfg}\be = \be - \sfg^{-1}\JJ^{\ft}(\JJ\sfg^{-1}\JJ^{\ft})^{-1}\JJ\be$, where $\JJ^{\ft}$ is the adjoint map of $\JJ$ for all $e\in\cE$.

  Alternatively, if $\rN$ is a one-to-one map from an inner product space $\cE_{\rN}$ to $\cE$ such that $W=\rN(\cE_{\rN})$, then the projection to $W$ could be given by $\Pi_{\sfg}\be = \rN(\rN^{\ft}\sfg\rN)^{-1}\rN^{\ft}\sfg\be$.

  The operators $\sfg\Pi_{\sfg}$ and $\Pi_{\sfg}\sfg^{-1}$ are self-adjoint under $\langle,\rangle_{\cE}$.
\end{proposition}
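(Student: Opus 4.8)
The plan is to reduce everything to the uniqueness of metric projections recorded just above the statement. Write $\langle\be_1,\be_2\rangle_{\cE,\sfg}=\langle\be_1,\sfg\be_2\rangle_{\cE}$. A linear map $\Pi$ equals the projection onto $W$ with respect to $\langle,\rangle_{\cE,\sfg}$ precisely when (i) $\Pi\be\in W$ for every $\be\in\cE$ and (ii) $\langle\be-\Pi\be,w\rangle_{\cE,\sfg}=0$ for all $w\in W$; since such a projection is unique, it suffices to verify (i) and (ii) for each candidate expression, and the two expressions will then automatically coincide.

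First I would settle well-definedness, i.e. that $\JJ\sfg^{-1}\JJ^{\ft}$ on $\cEJ$ and $\rN^{\ft}\sfg\rN$ on $\cE_{\rN}$ are invertible. Both are self-adjoint because $\sfg$, hence $\sfg^{-1}$, is self-adjoint. They are positive-definite: $\sfg$ is positive-definite, $\JJ^{\ft}$ is injective (the adjoint of the surjection $\JJ$ has trivial kernel, being the orthogonal complement of $\operatorname{Range}\JJ=\cEJ$), and $\rN$ is injective by hypothesis, so $\langle x,\JJ\sfg^{-1}\JJ^{\ft}x\rangle_{\cEJ}=\langle\JJ^{\ft}x,\sfg^{-1}\JJ^{\ft}x\rangle_{\cE}>0$ for $x\neq 0$, and likewise $\langle u,\rN^{\ft}\sfg\rN u\rangle_{\cE_{\rN}}=\langle\rN u,\sfg\rN u\rangle_{\cE}>0$ for $u\neq 0$.

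Next I would check (i) and (ii). For the nullspace formula $\Pi_{\sfg}\be=\be-\sfg^{-1}\JJ^{\ft}(\JJ\sfg^{-1}\JJ^{\ft})^{-1}\JJ\be$, applying $\JJ$ gives $\JJ\be-(\JJ\sfg^{-1}\JJ^{\ft})(\JJ\sfg^{-1}\JJ^{\ft})^{-1}\JJ\be=0$, so $\Pi_{\sfg}\be\in\Null(\JJ)=W$; and for $w\in W$, $\langle\be-\Pi_{\sfg}\be,w\rangle_{\cE,\sfg}=\langle\JJ^{\ft}(\JJ\sfg^{-1}\JJ^{\ft})^{-1}\JJ\be,w\rangle_{\cE}=\langle(\JJ\sfg^{-1}\JJ^{\ft})^{-1}\JJ\be,\JJ w\rangle_{\cEJ}=0$ since $\JJ w=0$. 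For the range formula $\Pi_{\sfg}\be=\rN(\rN^{\ft}\sfg\rN)^{-1}\rN^{\ft}\sfg\be$ property (i) is immediate from $\Pi_{\sfg}\be\in\rN(\cE_{\rN})=W$; writing $w=\rN u$ for arbitrary $w\in W$, $\langle\be-\Pi_{\sfg}\be,\rN u\rangle_{\cE,\sfg}=\langle\rN^{\ft}\sfg\be-\rN^{\ft}\sfg\rN(\rN^{\ft}\sfg\rN)^{-1}\rN^{\ft}\sfg\be,u\rangle_{\cE_{\rN}}=0$. Uniqueness then identifies both expressions with $\Pi_{\sfg,W}$.

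Finally, for the self-adjointness claim I would compute directly from the nullspace formula: $\sfg\Pi_{\sfg}=\sfg-\JJ^{\ft}(\JJ\sfg^{-1}\JJ^{\ft})^{-1}\JJ$, which equals its own $\langle,\rangle_{\cE}$-adjoint because $\sfg$ and $(\JJ\sfg^{-1}\JJ^{\ft})^{-1}$ are self-adjoint. From $\sfg\Pi_{\sfg}=(\sfg\Pi_{\sfg})^{\ft}=\Pi_{\sfg}^{\ft}\sfg$ we get $\Pi_{\sfg}^{\ft}=\sfg\Pi_{\sfg}\sfg^{-1}$, hence $(\Pi_{\sfg}\sfg^{-1})^{\ft}=\sfg^{-1}\Pi_{\sfg}^{\ft}=\Pi_{\sfg}\sfg^{-1}$. (Alternatively the range formula gives $\sfg\Pi_{\sfg}=\sfg\rN(\rN^{\ft}\sfg\rN)^{-1}\rN^{\ft}\sfg$, manifestly self-adjoint.) I do not expect a genuine obstacle here; the only points requiring care are bookkeeping of which space each adjoint acts on, so that identities like $\langle\JJ^{\ft}x,w\rangle_{\cE}=\langle x,\JJ w\rangle_{\cEJ}$ are applied in the right direction, and the invertibility step, which is exactly where surjectivity of $\JJ$ (respectively injectivity of $\rN$) and positive-definiteness of $\sfg$ are used.
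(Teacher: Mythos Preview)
Your proposal is correct and follows essentially the same route as the paper: establish invertibility of $\JJ\sfg^{-1}\JJ^{\ft}$ and $\rN^{\ft}\sfg\rN$ via injectivity of $\JJ^{\ft}$ (resp.\ $\rN$) and positive-definiteness of $\sfg$, then verify the projection identity $\langle \Pi_{\sfg}\be, w\rangle_{\cE,\sfg}=\langle \be, w\rangle_{\cE,\sfg}$ for $w\in W$. You are in fact slightly more thorough than the paper, which checks only the orthogonality condition and leaves $\Pi_{\sfg}\be\in W$ implicit, and which dispatches the self-adjointness of $\sfg\Pi_{\sfg}$ and $\Pi_{\sfg}\sfg^{-1}$ in one line (``follows from the defining equations of $\Pi_{\sfg}$'') where you spell out the computation.
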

\begin{proof} The assumption that $\JJ$ is onto $\cE_{\JJ}$ shows $\JJ^{\ft}$ is injective (as $\JJ^{\ft}a = 0$ implies $\langle a, \JJ\omega\rangle_{\cE_{\JJ}} = 0$ for all $\omega\in \cE$, and since $\JJ$ is onto this implies $a = 0$). This in turn implies  $\JJ\sfg^{-1}\JJ^{\ft}$ is invertible as if $\JJ\sfg^{-1}\JJ^{\ft} a = 0$, then $\langle \JJ\sfg^{-1}\JJ^{\ft}a, a\rangle_{\cE} = 0$, so $\langle \sfg^{-1}\JJ^{\ft}a, \JJ^{\ft}a\rangle_{\cE} = 0$ and hence $\JJ^{\ft}a = 0$ as $\sfg$ is positive-definite. We can show $\rN^{\ft}\sfg\rN$ is invertible similarly.

  For the first case, if $e_W \in W=\Null(\JJ)$ and $e\in \cE$,
$$\langle \sfg^{-1}\JJ^{\ft}(\JJ\sfg^{-1}\JJ^{\ft})^{-1}\JJ e, \sfg\be_W\rangle_{\cE} =  \langle\JJ^{\ft}(\JJ\sfg^{-1}\JJ^{\ft})^{-1}\JJ e, \be_W\rangle_{\cE}=\langle(\JJ\sfg^{-1}\JJ^{\ft})^{-1}\JJ\be, \JJ\be_W\rangle_{\cE}$$
  where the last term is zero because $\be_W\in \Null(\JJ)$, so $\langle\Pi_{\sfg} e, \sfg e_W\rangle_{\cE} = \langle e, \sfg e_W\rangle_{\cE}$. For the second case, assuming $\be_W =\rN(\be_{\rN})$ for $\be_{\rN}\in \cE_{\rN}$ then (Using $(AB)^{\ft} = B^{\ft}A^{\ft}$):
$$\langle \rN(\rN^{\ft}\sfg\rN)^{-1}\rN^{\ft}\sfg \be, \sfg\rN(\be_{\rN})\rangle_{\cE} =
\langle\sfg\be, \rN(\rN^{\ft}\sfg\rN)^{-1}\rN^{\ft} \sfg\rN(\be_{\rN})\rangle_{\cE}=\langle\sfg\be, \rN(\be_{\rN})\rangle_{\cE}$$
The last statement follows from the defining equations of $\Pi_{\sfg}$.\qed
\end{proof}
Recall the Riemannian gradient of a function $f$ on a manifold $\cM$ with Riemannian metric $\langle,\rangle_R$ is the vector field $\rgrad_f$ such that $\langle\rgrad_f(Y), \xi(Y)\rangle_R = (\rD_{\xi} f)(Y)$ for any point $Y\in \cM$, and any vector field $\xi$. Let $\cH$ be a subbundle of the tangent bundle $T\cM$, we recall this means $\cH$ is a collection of subspaces (fibers) $\cH_Y\subset T_Y\cM$ for $Y\in \cM$ such that $\cH$ is itself a vector bundle on $\cM$, i.e. $\cH$ is locally a product of a vector space and an open subset of $\cM$, together with a linear coordinate change condition (see \cite{ONeil1983}, definition 7.24 for details). We can define the $\cH$-Riemannian gradient $\rgradHf$ of $f$ as the unique $\cH$-valued vector field such that $\langle\rgradHf, \xi_{\cH}\rangle_R = \rD_{\xi_{\cH}} f$ for any $\cH$-valued vector field $\xi_{\cH}$. Uniqueness follows from nondegeneracy of the inner product restricted to $\cH$. Clearly, when $\cH=T\cM$, $\rgradHf=\rgrad_{T\cM}f$ is the usual Riemannian gradient. We have:
\begin{proposition}\label{prop:NJ}
  Let $(\cE, \langle,\rangle_{\cE})$ be an embedded ambient space of a manifold $\cM$ as in \cref{def:ambient}. Let $\sfg$ be a smooth operator-valued function associating each $\bY\in \cM$ a self-adjoint positive-definite operator $\sfg(Y)$ on $\cE$. Thus, each $\sfg(Y)$ defines an inner product on $\cE$, which induces an inner product on $T_Y\cM$ and hence $\sfg$ induces a Riemannian metric on $\cM$. Let $\cH$ be a subbundle of $T\cM$. Define $\Pi_{\cH,\sfg}$ to be the operator-valued function such that $\Pi_{\cH,\sfg}(\bY)$ is the projection associated with $\sfg(Y)$ from $\cE$ to the fiber $\cH_Y$, and for the case $\cH=T\cM$, define $\Pi_{\cM,\sfg} = \Pi_{T\cM,\sfg}$. For an ambient gradient $\hgradf$ of $f$, the $\cH$-Riemannian gradient of $f$ can be evaluated as:
\begin{equation}
\label{eq:rgrad}
\rgradHf = \Pi_{\cH,\sfg}\sfg^{-1} \hgradf
\end{equation}
If there is an inner product space $\cE_{\JJ}$ and a map $\JJ$ from $\cM$ to the space $\fL(\cE, \cE_{\JJ})$ of linear maps from $\cE$ to $\cE_{\JJ}$, such that for each $\bY\in \cM$, the range of $\JJ(\bY)$ is precisely $\cE_{\JJ}$, and its  nullspace is $\cH_Y$ then $\Pi_{\cH,\sfg}(\bY)e$ for $e\in\cE$ could be given by:
\begin{equation}
\Pi_{\cH,\sfg}(\bY)\be = \be - \sfg^{-1}\JJ^{\ft}(\JJ\sfg^{-1}\JJ^{\ft})^{-1}\JJ\be; \ \text{all are evaluated at }\bY
\end{equation}
If there is an inner product space $\cE_{\rN}$ and a map $\rN$ from $\cM$ to the space $\fL(\cE_{\rN}, \cE)$ of linear maps from $\cE_{\rN}$ to $\cE$ such that for each $\bY\in\cM$, $\rN(\bY)$ is one-to-one, with its range is precisely $\cH_Y$ then:
\begin{equation}
\Pi_{\cH,\sfg}\be = \rN(\rN^{\ft}\sfg\rN)^{-1}\rN^{\ft}\sfg\be; \text{ all are evaluated at }\bY
\end{equation}
\end{proposition}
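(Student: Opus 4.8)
The plan is to reduce \cref{prop:NJ} entirely to \cref{prop:JN}, applied fiberwise at each point $Y\in\cM$. First I would verify \cref{eq:rgrad}: by \cref{def:ambient}, the ambient gradient satisfies $\langle \xi_{\cH}, \hgradf\rangle_{\cE} = \rD_{\xi_{\cH}} f$ for every $\cH$-valued vector field $\xi_{\cH}$. On the other hand, the $\cH$-Riemannian gradient is characterized by $\langle \rgradHf, \xi_{\cH}\rangle_R = \rD_{\xi_{\cH}} f$, and the Riemannian metric is by construction $\langle u, v\rangle_R = \langle u, \sfg(Y) v\rangle_{\cE}$ for $u,v\in T_Y\cM$. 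So it suffices to show that $\Pi_{\cH,\sfg}\sfg^{-1}\hgradf$ lies in $\cH_Y$ and satisfies $\langle \Pi_{\cH,\sfg}\sfg^{-1}\hgradf, \sfg\xi_{\cH}\rangle_{\cE} = \langle \hgradf, \xi_{\cH}\rangle_{\cE}$. Membership in $\cH_Y$ is immediate since $\Pi_{\cH,\sfg}(Y)$ has image $\cH_Y$; the pairing identity is exactly the defining property of the $\sfg(Y)$-orthogonal projection onto $\cH_Y$ applied to the vector $\sfg^{-1}\hgradf$, namely $\langle \Pi_{\sfg}\be, \sfg w\rangle_{\cE} = \langle \be, \sfg w\rangle_{\cE}$ for $w\in\cH_Y$, with $\be = \sfg^{-1}\hgradf$ so that $\langle\be,\sfg w\rangle_{\cE} = \langle \hgradf, w\rangle_{\cE}$. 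By uniqueness of the $\cH$-Riemannian gradient (nondegeneracy of the restricted inner product), this proves \cref{eq:rgrad}.

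For the two closed-form expressions, I would simply observe that the hypotheses are the pointwise hypotheses of \cref{prop:JN} at each $Y$: in the first case $\cH_Y = \Null(\JJ(Y))$ with $\JJ(Y)$ surjective onto $\cE_{\JJ}$, so \cref{prop:JN} gives $\Pi_{\cH,\sfg}(Y)\be = \be - \sfg^{-1}\JJ^{\ft}(\JJ\sfg^{-1}\JJ^{\ft})^{-1}\JJ\be$; in the second case $\cH_Y = \rN(Y)(\cE_{\rN})$ with $\rN(Y)$ injective, so \cref{prop:JN} gives $\Pi_{\cH,\sfg}(Y)\be = \rN(\rN^{\ft}\sfg\rN)^{-1}\rN^{\ft}\sfg\be$. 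The invertibility of $\JJ\sfg^{-1}\JJ^{\ft}$ and of $\rN^{\ft}\sfg\rN$ at each point is part of the conclusion of \cref{prop:JN}. Substituting these expressions into \cref{eq:rgrad} yields the stated formulas for $\rgradHf$.

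The only genuinely new content beyond \cref{prop:JN} is the smoothness/well-definedness of $\Pi_{\cH,\sfg}$ as an operator-valued function on $\cM$, which I would address briefly: since $\sfg$ is a smooth operator-valued function and (in the respective cases) $\JJ$ or $\rN$ is a smooth map into a space of linear maps with constant rank, the composite $\sfg^{-1}\JJ^{\ft}(\JJ\sfg^{-1}\JJ^{\ft})^{-1}\JJ$ (resp. $\rN(\rN^{\ft}\sfg\rN)^{-1}\rN^{\ft}\sfg$) is smooth in $Y$ by smoothness of matrix inversion on the open set where the relevant operator is invertible. I expect the main — though minor — obstacle to be purely bookkeeping: making sure that the adjoints $\JJ^{\ft}$ and $\rN^{\ft}$ are taken with respect to the fixed ambient inner product $\langle,\rangle_{\cE}$ (not the $\sfg$-twisted one) and that the identification of $T_Y\cM$ and $\cH_Y$ as subspaces of $\cE$ is used consistently, so that "projection associated with $\sfg(Y)$" in the statement matches the $\Pi_{\sfg}$ of \cref{prop:JN} verbatim. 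Once that identification is pinned down, the proof is a direct quotation of \cref{prop:JN} fiberwise plus the one-line gradient computation above.
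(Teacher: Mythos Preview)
Your proposal is correct and follows essentially the same route as the paper: verify \cref{eq:rgrad} by the projection property (the paper phrases this via self-adjointness of $\Pi_{\cH,\sfg}\sfg^{-1}$, established at the end of \cref{prop:JN}, but that is the same computation), and then quote \cref{prop:JN} fiberwise for the two explicit formulas.

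One caveat on your final paragraph: the smoothness discussion is not part of the proposition, and your assumption that $\rN$ is smooth is unwarranted---the paper explicitly remarks immediately after the proof that no smoothness of $\JJ$ or $\rN$ is assumed here and that $\rN$ is \emph{often not smooth} (e.g.\ because it involves a choice of orthogonal complement $Y_\perp$). The projection $\Pi_{\cH,\sfg}$ is smooth because $\cH$ is a subbundle and $\sfg$ is smooth, not because any particular $\rN$ is; so drop that paragraph or rephrase it accordingly.
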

\begin{proof} For any $\cH$-valued vector field $\xi_{\cH}$, we have:
  $$\langle\Pi_{\cH, \sfg}\sfg^{-1}\hgradf,\sfg \xi_{\cH} \rangle_{\cE} = \langle\hgradf,\Pi_{\cH, \sfg}\sfg^{-1}\sfg \xi_{\cH} \rangle_{\cE} =
  \langle\hatfY, \xi_{\cH} \rangle_{\cE} =\rD_{\xi_{\cH}} f$$
because $\Pi_{\cH, \sfg}\sfg^{-1}$ is self-adjoint and the projection is idempotent. The remaining statements are just a parametrized version of \cref{prop:JN}.\qed
\end{proof}
Note, we are not making any smoothness assumption on $\JJ$ or $\rN$ yet, although $\Pi_{\cH,\sfg}$ is assumed to be sufficiently smooth. In fact, $\rN$ is often not smooth. $\JJ$ is usually smooth as it is constructed from a smooth constraint on $\cM$, or on the horizontal requirements of a vector field.
\begin{definition}A triple $(\cM, \sfg, \cE)$ with $\cE$ an inner product space, $\cM\subset\cE$ a differentiable manifold submersion, and $\sfg$ is a positive-definite operator-valued-function from $\cM$ to $\fL(\cE, \cE)$ is called an embedded ambient structure of $\cM$. $\cM$ is a Riemannian manifold with the metric induced by $\sfg$. 
\end{definition}  
From the definition of Lie brackets, for an embedded ambient space $\cE$ of $\cM$ we have
\begin{equation}
\label{eq:vectorfield}
\rD_{\xi}\eta-\rD_{\eta}\xi = [\xi, \eta]\text{ for all vector fields } \xi, \eta \text{ on } \cM.
\end{equation}
Recall if $\cM, \langle,\rangle_R$ is a Riemannian manifold with the Levi-Civita connection $\nabla$, the Riemannian Hessian (vector product) of a function $f$ is the operator sending a tangent vector $\xi$ to the tangent vector $\rhess_f^{11}\xi = \nabla_{\xi}\rgrad_f$. The Riemannian Hessian bilinear form is the map evaluating on two vector fields $\xi, \eta$ as $\langle\nabla_{\xi}\rgrad_f, \eta\rangle_R$. For a subbundle $\cH$ of $T\cM$ and a $\cH$-valued vector field $\xi_{\cH}$, we define the $\cH$-Riemannian Hessian similarly as $\rhess_{\cH, f}^{11}\xi_{\cH} = \Pi_{\cH, \sfg}\nabla_{\xi_{\cH}}\rgradHf$ and we call $\rhess_{\cH, f}^{02}(\xi_{\cH}, \eta_{\cH}) = \langle\Pi_{\cH, \sfg}\nabla_{\xi_{\cH}}\rgradHf, \eta_{\cH}\rangle_R = \langle\nabla_{\xi_{\cH}}\rgradHf, \eta_{\cH}\rangle_R$ the $\cH$-Riemannian Hessian bilinear form. The next theorem shows how to compute the Riemannian connection and the associated Riemannian Hessian.
\begin{theorem}\label{prop:covar}
Let $(\cM, \sfg, \cE)$ be an embedded ambient structure of a Riemannian manifold $\cM$. There exists an $\cE$-valued bilinear form $\cX$ sending a pair of vector fields $(\xi, \eta)$ to $\cX(\xi, \eta)\in \cE$ such that for any vector field $\xi_0$:
\begin{equation}
  \label{eq:cX}
  \langle\cX(\xi, \eta), \xi_0\rangle_{\cE} = \langle\xi, (\rD_{\xi_0}\sfg) \eta\rangle_{\cE}
\end{equation}
Let $\Pi_{\cM, \sfg}$ be the projection from $\cE$ to the tangent bundle of $\cM$. Then $\Pi_{\cM, \sfg}\sfg^{-1}\cX(\xi, \eta)$ is uniquely defined given $\xi, \eta$ and $\cX(\xi, \eta)$ is also unique if we require $\cX(\xi(Y), \eta(Y))$ to be in $T_Y\cM$ for all $Y\in \cM$. For two vector fields $\xi, \eta$  on $\cM$, define
\begin{equation}
\label{eq:connection}
\begin{gathered}
\rK(\xi, \eta ) := \frac{1}{2}((\rD_{\xi}\sfg)\eta +(\rD_{\eta}\sfg)\xi -\cX(\xi, \eta))\in \cE\\
\hat{\nabla}_{\xi}\eta := \rD_{\xi} \eta + \sfg^{-1}\rK(\xi, \eta)\\
\nabla_{\xi}\eta  := \Pi_{\cM;\sfg}\hat{\nabla}_{\xi}\eta = \Pi_{\cM;\sfg}(\rD_{\xi} \eta + \sfg^{-1}\rK(\xi, \eta))
\end{gathered}
\end{equation}
Then $\nabla_{\xi}\eta$ is the covariant derivative associated with the Levi-Civita connection. It could be written using the Christoffel function $\Gamma$:
\begin{equation}
  \label{eq:useGamma}
  \begin{gathered}
  \Gamma(\xi, \eta) := -(\rD_{\xi}\Pi_{\cM;\sfg})\eta + \Pi_{\cM;\sfg}\sfg^{-1}\rK(\xi, \eta)\\
  \nabla_{\xi}\eta = \rD_{\xi}\eta +\Gamma(\xi, \eta)
  \end{gathered}
\end{equation}
If $\cH$ is a subbundle of $T\cM$, and $\xi_{\cH}, \eta_{\cH}$ are two $\cH$-valued vector fields, we have:
\begin{equation}
  \label{eq:useGammaH}
  \begin{gathered}
    \Pi_{\cH, \sfg}\nabla_{\xiH}\etaH = \rD_{\xiH}\etaH +\Gamma_{\cH}(\xiH, \etaH) \text{ with }\\
  \Gamma_{\cH}(\xiH, \etaH) := -(\rD_{\xiH}\Pi_{\cH;\sfg})\etaH + \Pi_{\cH;\sfg}\sfg^{-1}\rK(\xiH, \etaH)
  \end{gathered}
\end{equation}
If $f$ is a function on $\cM$, $\hatfY$ is an ambient gradient of $f$ and $\hatfYY$ is the ambient Hessian operator, we have $\rhessHf^{11}\xiH := \Pi_{\cH;\sfg}\nabla_{\xiH}\rgradHf$ and $\rhessHf^{02}$ are given by:
\begin{equation}
  \label{eq:rhess11}
  \begin{gathered}
\rhessHf^{11}\xiH = \Pi_{\cH;\sfg}\sfg^{-1}(\hatfYY\xiH +
\sfg(\rD_{\xiH}(\Pi_{\cH,\sfg}\sfg^{-1}))\hatfY +\rK(\xiH, \Pi_{\cH,\sfg}\sfg^{-1}\hatfY))\\
=\Pi_{\cH;\sfg}\sfg^{-1}(\hatfYY\xiH +
\sfg(\rD_{\xiH}\Pi_{\cH, \sfg})\sfg^{-1}\hatfY
-(\rD_{\xiH}\sfg)\sfg^{-1}\hatfY
+\rK(\xiH, \Pi_{\cH, \sfg}\sfg^{-1}\hatfY))
\end{gathered}
\end{equation}
\begin{equation}
\label{eq:rhess02}
\begin{gathered}
\rhessHf^{02}(\etaH, \xiH) = \langle\nabla_{\xiH}\Pi_{\cH,\sfg} \sfg^{-1}\hatfY, \sfg\etaH\rangle_{\cE}= \hatfYY(\xiH, \etaH) -\langle \Gamma_{\cH}(\xiH, \etaH),\hatfY\rangle_{\cE}
\end{gathered}
\end{equation}
\end{theorem}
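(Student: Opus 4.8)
\noindent\emph{Proof plan.} The plan is to define $\cX$ by Riesz representation, settle the two uniqueness assertions, then verify that the vector field $\nabla$ of \eqref{eq:connection} is an affine connection that is torsion-free and compatible with the metric induced by $\sfg$ — hence, by uniqueness of the Levi--Civita connection, equal to it — and finally to read off \eqref{eq:useGamma}, \eqref{eq:useGammaH} and \eqref{eq:rhess11}--\eqref{eq:rhess02} by operator algebra. For $\cX$: with $\xi,\eta$ fixed, $\xi_0\mapsto\langle\xi,(\rD_{\xi_0}\sfg)\eta\rangle_\cE$ is, at each $Y\in\cM$, a linear functional on $T_Y\cM$, so Riesz represents it by a unique vector of $T_Y\cM$, giving the $T\cM$-valued $\cX(\xi,\eta)$; any $\cE$-valued representative differs from it by a vector $\delta\perp_\cE T_Y\cM$, and then $\langle\sfg^{-1}\delta,\sfg v\rangle_\cE=\langle\delta,v\rangle_\cE=0$ for $v\in T_Y\cM$ shows $\sfg^{-1}\delta$ is $\sfg$-orthogonal to $T_Y\cM$, so $\Pi_{\cM,\sfg}\sfg^{-1}\delta=0$ and $\Pi_{\cM,\sfg}\sfg^{-1}\cX(\xi,\eta)$ is well defined.

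Next I would check the connection axioms for $\nabla_\xi\eta=\Pi_{\cM,\sfg}(\rD_\xi\eta+\sfg^{-1}\rK(\xi,\eta))$. Tensoriality in $\xi$ and additivity in $\eta$ are immediate since $\rD_{(\cdot)}$ is tensorial in the direction and $\rK$ is tensorial in both slots; the Leibniz rule $\nabla_\xi(f\eta)=f\nabla_\xi\eta+(\rD_\xi f)\eta$ uses $\rD_\xi(f\eta)=f\rD_\xi\eta+(\rD_\xi f)\eta$, tensoriality of $\rK$ in the second slot, and $\Pi_{\cM,\sfg}\eta=\eta$ for a tangent field. For torsion-freeness, self-adjointness of each operator $\rD_{\xi_0}\sfg$ gives $\langle\cX(\xi,\eta),\xi_0\rangle_\cE=\langle\cX(\eta,\xi),\xi_0\rangle_\cE$, so $\Pi_{\cM,\sfg}\sfg^{-1}\cX$ is symmetric and $\nabla_\xi\eta-\nabla_\eta\xi=\Pi_{\cM,\sfg}(\rD_\xi\eta-\rD_\eta\xi)=\Pi_{\cM,\sfg}[\xi,\eta]=[\xi,\eta]$ by \eqref{eq:vectorfield}. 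Metric-compatibility is the core of the argument: expanding $\rD_\xi\langle\eta,\sfg\phi\rangle_\cE$ by the product rule and, on the other side, using that $\sfg\Pi_{\cM,\sfg}$ is $\langle,\rangle_\cE$-self-adjoint (\cref{prop:JN}) with $\Pi_{\cM,\sfg}\eta=\eta$, $\Pi_{\cM,\sfg}\phi=\phi$, the identity $\rD_\xi\langle\eta,\phi\rangle_R=\langle\nabla_\xi\eta,\phi\rangle_R+\langle\eta,\nabla_\xi\phi\rangle_R$ collapses to the scalar identity $\langle\rK(\xi,\eta),\phi\rangle_\cE+\langle\rK(\xi,\phi),\eta\rangle_\cE=\langle\eta,(\rD_\xi\sfg)\phi\rangle_\cE$. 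Substituting the definition of $\rK$, the relation $\langle\cX(\xi,\eta),\phi\rangle_\cE=\langle\xi,(\rD_\phi\sfg)\eta\rangle_\cE$, and self-adjointness of each $\rD_{(\cdot)}\sfg$, four of the six resulting terms cancel in pairs and the remaining two add up to $\langle\eta,(\rD_\xi\sfg)\phi\rangle_\cE$. The hard part will be exactly this Koszul-type cancellation; everything around it is mechanical, and uniqueness of the Levi--Civita connection then identifies $\nabla$.

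The Christoffel-function form \eqref{eq:useGamma} follows by differentiating $\eta=\Pi_{\cM,\sfg}\eta$ to get $\Pi_{\cM,\sfg}\rD_\xi\eta=\rD_\xi\eta-(\rD_\xi\Pi_{\cM,\sfg})\eta$ and substituting into $\nabla_\xi\eta=\Pi_{\cM,\sfg}\rD_\xi\eta+\Pi_{\cM,\sfg}\sfg^{-1}\rK(\xi,\eta)$. For \eqref{eq:useGammaH} I would use that for the nested fibres $\cH_Y\subset T_Y\cM$ the $\sfg$-orthogonal projections satisfy $\Pi_{\cH,\sfg}\Pi_{\cM,\sfg}=\Pi_{\cH,\sfg}$, so $\Pi_{\cH,\sfg}\nabla_{\xiH}\etaH=\Pi_{\cH,\sfg}(\rD_{\xiH}\etaH+\sfg^{-1}\rK(\xiH,\etaH))$, and then repeat the differentiation trick with $\Pi_{\cH,\sfg}\etaH=\etaH$.

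Finally, for \eqref{eq:rhess11}, insert $\rgradHf=\Pi_{\cH,\sfg}\sfg^{-1}\hatfY$ (from \eqref{eq:rgrad}) into \eqref{eq:useGammaH}, expand $\rD_{\xiH}(\Pi_{\cH,\sfg}\sfg^{-1}\hatfY)$ by the product rule with $\hatfYY\xiH=\rD_{\xiH}\hatfY$, and replace $(\rD_{\xiH}\Pi_{\cH,\sfg})(I-\Pi_{\cH,\sfg})$ by $\Pi_{\cH,\sfg}(\rD_{\xiH}\Pi_{\cH,\sfg})$, an identity immediate from differentiating $\Pi_{\cH,\sfg}^2=\Pi_{\cH,\sfg}$; the two displayed forms of \eqref{eq:rhess11} are interchanged via $\rD_{\xiH}(\sfg^{-1})=-\sfg^{-1}(\rD_{\xiH}\sfg)\sfg^{-1}$ and $\Pi_{\cH,\sfg}\sfg^{-1}\sfg\Pi_{\cH,\sfg}=\Pi_{\cH,\sfg}$. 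For \eqref{eq:rhess02}, differentiate the identity $\langle\rgradHf,\etaH\rangle_R=\rD_{\etaH}f$ along $\xiH$, apply metric-compatibility and the ambient-Hessian identity \eqref{eq:ehess}, and push $\hatfY$ through the self-adjoint operators $\sfg\Pi_{\cH,\sfg}$ and $\Pi_{\cH,\sfg}\sfg^{-1}$ together with $\Pi_{\cH,\sfg}\rD_{\xiH}\etaH=\rD_{\xiH}\etaH-(\rD_{\xiH}\Pi_{\cH,\sfg})\etaH$; collecting the terms yields $\hatfYY(\xiH,\etaH)-\langle\Gamma_\cH(\xiH,\etaH),\hatfY\rangle_\cE$.
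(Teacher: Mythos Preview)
Your proposal is correct and follows essentially the same route as the paper: Riesz/index-raising for $\cX$ and its uniqueness, direct verification of the connection axioms (torsion-freeness via symmetry of $\rK$ and \eqref{eq:vectorfield}, metric compatibility via a Koszul-type cancellation), then the product-rule manipulations $\Pi\rD_\xi\eta=\rD_\xi\eta-(\rD_\xi\Pi)\eta$ and $\Pi_{\cH,\sfg}\Pi_{\cM,\sfg}=\Pi_{\cH,\sfg}$ for \eqref{eq:useGamma}--\eqref{eq:useGammaH}, and the self-adjointness/idempotency identities for \eqref{eq:rhess11}--\eqref{eq:rhess02}. The only cosmetic differences are that the paper checks metric compatibility in the polarized form $2\langle\nabla_\xi\eta,\sfg\eta\rangle_\cE=\rD_\xi\langle\eta,\sfg\eta\rangle_\cE$ rather than with three distinct fields, and for the first line of \eqref{eq:rhess11} it starts from $\Pi_{\cH,\sfg}(\rD_{\xiH}\etaH+\sfg^{-1}\rK)$ directly rather than routing through $\Gamma_\cH$ and the idempotency identity $(\rD\Pi)(I-\Pi)=\Pi(\rD\Pi)$; both variants are equivalent.
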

The form $\Gamma(\xi, \eta)$ appeared in \cite{Edelman_1999} and was computed for the case of a Stiefel manifold, and was called a Christoffel function. It includes the Christoffel metric term $\rK$ and the derivative of $\Pi_{\cM, \sfg}$. Evaluated at $\bY\in\cM$, it depends only on the tangent vectors $\eta(\bY)$ and $\xi(\bY)$, not on the whole vector fields. Equation (2.57) in that reference is the expression of $\rhess^{02}_f$ in terms of $\Gamma$ above. In \cite{Edelman_1999}, $\Gamma_{\cH}$ was computed for a Grassmann manifold. The formulation for subbundles allows us to extend the result to Riemannian submersions and quotient manifolds.
\begin{proof} $\cX$ is the familiar index-raising term: for $Y\in\cM$ and $v_0, v_1, v_2\in T_Y\cM$, as $\langle v_1, (\rD_{v_0}\sfg) v_2\rangle_{\cE}$ is a tri-linear function on $T_Y\cM$ and the Riemannian inner product on $T_Y\cM$ is nondegenerate, the index-raising bilinear form $\tilde{\cX}$ with value in $T_Y\cM$ is uniquely defined, so $\cX(\xi(Y), \eta(Y))=\tilde{\cX}(\xi(Y), \eta(Y))$ satisfies \cref{eq:cX}, where we consider $T_Y\cM$ as a subspace of $\cE$. Thus, we have proved the existence of $\cX$. If we take another $\cE$-valued function $\cX_1$ satisfying the same condition but not necessarily in the tangent space, the expression $\Pi_{\cM,\sfg}\sfg^{-1}\cX_1$, hence $\Pi_{\cM,\sfg}\sfg^{-1}\rK$ is independent of the choice of $\cX_1$, as for three vector fields $\xi_0, \xi, \eta$
  $$\langle\sfg\xi_0, \Pi_{\cM,\sfg}\sfg^{-1}\cX_1(\xi, \eta)\rangle_{\cE} = \langle\sfg\xi_0, \sfg^{-1}\cX_1(\xi, \eta)\rangle_{\cE}=\langle\xi, \rD_{\xi_0}\eta\rangle_{\cE}$$
  We can verify directly that $\nabla_{\xi}\eta$ satisfies the conditions of a covariant derivative: linear in $\xi$ and satisfying the product rule with respect to $\eta$. Similar to the calculation with coordinate charts, we can show $\nabla$ is compatible with metric: for two vector fields $\eta, \xi$, $2\langle\nabla_{\xi}\eta, \sfg\eta \rangle_{\cE}= 2\langle\Pi_{\cM,\sfg}\hat{\nabla}_{\xi} \eta, \sfg\eta \rangle_{\cE}$, which is $2\langle\rD_{\xi} \eta +\sfg^{-1}\rK(\xi, \eta), \sfg\eta \rangle_{\cE}$ by definition and by property of the projection. Expanding the last expression and use $\langle\cX(\xi, \eta), \eta \rangle_{\cE}=\langle(\rD_{\eta}\sfg)\xi,\eta \rangle_{\cE}$
$$\begin{gathered}
2\langle\rD_{\xi} \eta, \sfg\eta \rangle_{\cE}+2\langle\sfg^{-1}\rK(\xi, \eta), \sfg\eta \rangle_{\cE} =\\2\langle\rD_{\xi} \eta, \sfg\eta \rangle_{\cE} +
\langle(\rD_{\xi}\sfg)\eta + (\rD_{\eta}\sfg)\xi -\cX(\xi, \eta), \eta \rangle_{\cE}\\
=2\langle\rD_{\xi}\eta, \sfg\eta \rangle_{\cE} + \langle(\rD_{\xi}\sfg)\eta, \eta \rangle_{\cE} = \rD_{\xi}\langle \eta, \sfg\eta \rangle_{\cE}
  \end{gathered}$$
 Torsion-free follows from the fact that $\rK$ is symmetric and \cref{eq:vectorfield}:
$$\nabla_{\xi}\eta - \nabla_{\eta}\xi = \Pi_{\cM,\sfg}(\rD_{\xi}\eta - \rD_{\eta}\xi) = [\xi, \eta]$$
For \cref{eq:useGamma}, we note $\Pi_{\cM,\sfg}\rD_{\xi}\eta = \rD_{\xi}(\Pi_{\cM,\sfg}\eta) - 
(\rD_{\xi}\Pi_{\cM,\sfg})\eta$ so
$$\nabla_{\xi}\eta = \rD_{\xi}(\eta) - (\rD_{\xi}\Pi_{\cM,\sfg})\eta + \Pi_{\cM, \sfg}\sfg^{-1}\rK(\xi, \eta)$$
For \cref{eq:useGammaH}, for $Y\in\cM$, $\cH_Y\subset T_Y\cM$ implies $\Pi_{\cH, \sfg}\Pi_{\cM, \sfg}= \Pi_{\cH, \sfg}$. Therefore, \cref{eq:connection} implies
$$ \Pi_{\cH, \sfg}\nabla_{\xiH}\etaH  = \Pi_{\cH;\sfg}(\rD_{\xi} \etaH + \sfg^{-1}\rK(\xiH, \etaH)) $$
and as before, we use $\Pi_{\cH,\sfg}\rD_{\xiH}\etaH = \rD_{\xiH}(\Pi_{\cH,\sfg}\etaH) - 
(\rD_{\xiH}\Pi_{\cH,\sfg})\etaH$ and $\Pi_{\cH,\sfg}\etaH=\etaH$. The first line of \cref{eq:rhess11} is by definition and $\rD_{\xi}(\Pi_{\cH,\sfg}\sfg^{-1}\hatfY) = \rD_{\xi}(\Pi_{\cH,\sfg}\sfg^{-1})\hatfY +\Pi_{\cH,\sfg}\sfg^{-1}\hatfYY$. Expand, note $\Pi_{\cH,\sfg}\sfg^{-1}(\sfg 
\Pi_{\cH,\sfg}\rD_{\xiH}\sfg^{-1})\hatfY = -\Pi_{\cH,\sfg}\sfg^{-1}(\rD_{\xiH}\sfg)\sfg^{-1}\hatfY$ (as $\Pi_{\cH,\sfg}$ is idempotent), we have the second line. For \cref{eq:rhess02}:
$$\begin{gathered}
\langle\nabla_{\xiH} \Pi_{\cH,\sfg} (\sfg^{-1}\hatfY), \sfg\etaH\rangle_{\cE}= 
\rD_{\xiH}\langle \Pi_{\cH,\sfg} (\sfg^{-1}\hatfY), \sfg\etaH\rangle_{\cE}-
\langle \Pi_{\cH,\sfg} (\sfg^{-1}\hatfY), \sfg \nabla_{\xi}\etaH\rangle_{\cE}=\\
\rD_{\xiH}\langle\hatfY, \etaH\rangle_{\cE} -\langle\hatfY, \Pi_{\cH,\sfg}\sfg^{-1}\sfg\Pi_{\cH,\sfg}\hat{\nabla}_{\xiH}\etaH\rangle_{\cE}=\rD_{\xiH}\langle\hatfY, \etaH\rangle_{\cE} -\langle\hatfY , \Pi_{\cH,\sfg}\hat{\nabla}_{\xiH}\etaH\rangle_{\cE}\\
=\rD_{\xiH}\langle\hatfY, \etaH\rangle_{\cE} -\langle\hatfY , \rD_{\xiH}\etaH\rangle_{\cE} - \langle\hatfY, \Gamma_{\cH}(\xiH, \etaH)\rangle_{\cE} =\\ \hatfYY(\xiH, \etaH) - \langle\hatfY, \Gamma_{\cH}(\xiH, \etaH)\rangle_{\cE}
\end{gathered}
$$ 
from compatibility with metric, idempotency of $\Pi_{\cH,\sfg}$, \cref{eq:useGammaH} and  \cref{eq:ehess}.\qed
\end{proof}
When the projection is given in terms of $\JJ$, and $\JJ$ is sufficiently smooth we have:
\begin{proposition}\label{prop:rhess}
If $\JJ$ as in \cref{prop:NJ} is of class $C^2$ then:
\begin{equation}
\label{eq:rhess02a}
\begin{gathered}
\Gamma_{\cH}(\xiH, \etaH) = \sfg^{-1}\JJ^{\ft}(\JJ \sfg^{-1}\JJ^{\ft})^{-1}(\rD_{\xiH}\JJ)\etaH+ \Pi_{\cH, \sfg}\sfg^{-1} \rK(\xiH, \etaH)
\end{gathered}
\end{equation}
for two $\cH$-valued tangent vectors $\xiH,\etaH$ at $Y\in \cM$. We can evaluate $\rhessHf^{11}\xiH$ by setting $\omega =\hatfY$ in the following formula, which is valid for all $\cE$-valued function $\omega$:
\begin{equation}
\label{eq:rhess11a}
\begin{gathered}
\nabla_{\xiH}\Pi_{\cH,\sfg}\sfg^{-1} \omega = \Pi_{\cH,\sfg} \sfg^{-1} \rD_{\xiH}\omega - \Pi_{\cH,\sfg}\sfg^{-1}(\rD_{\xiH} \sfg)\sfg^{-1}\omega -\\
\Pi_{\cH,\sfg} (\rD_{\xiH}(\sfg^{-1}\JJ^{\ft})) (\JJ\sfg^{-1}\JJ^{\ft})^{-1}\JJ\sfg^{-1}\omega+ \Pi_{\cH,\sfg}\sfg^{-1}\rK(\xiH, \Pi_{\cH,\sfg}(\sfg^{-1} \omega))
\end{gathered}
\end{equation}
\end{proposition}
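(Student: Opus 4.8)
The plan is to unfold the definitions of the Christoffel function $\Gamma_{\cH}$ and of the covariant derivative from \cref{prop:covar}, substitute the explicit projector $\Pi_{\cH,\sfg}=\Id-\sfg^{-1}\JJ^{\ft}(\JJ\sfg^{-1}\JJ^{\ft})^{-1}\JJ$ of \cref{prop:NJ}, and simplify using two elementary identities: $\JJ(Y)\,\eta(Y)=0$ for any $\cH$-valued field $\eta$, and $\Pi_{\cH,\sfg}\sfg^{-1}\JJ^{\ft}=\sfg^{-1}\JJ^{\ft}-\sfg^{-1}\JJ^{\ft}(\JJ\sfg^{-1}\JJ^{\ft})^{-1}(\JJ\sfg^{-1}\JJ^{\ft})=0$. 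The $C^{2}$ hypothesis on $\JJ$ makes $\sfg^{-1}\JJ^{\ft}(\JJ\sfg^{-1}\JJ^{\ft})^{-1}$, hence $\Pi_{\cH,\sfg}$, of class $C^{1}$, so the directional derivatives below are legitimate. For \cref{eq:rhess02a} I would abbreviate $R:=\sfg^{-1}\JJ^{\ft}(\JJ\sfg^{-1}\JJ^{\ft})^{-1}$, so that $\Pi_{\cH,\sfg}=\Id-R\JJ$, and differentiate the product: $(\rD_{\xiH}\Pi_{\cH,\sfg})\etaH=-(\rD_{\xiH}R)(\JJ\etaH)-R(\rD_{\xiH}\JJ)\etaH$. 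Since $\etaH$ is $\cH$-valued, $\JJ\etaH=0$ at its base point, so the first summand vanishes and $(\rD_{\xiH}\Pi_{\cH,\sfg})\etaH=-\sfg^{-1}\JJ^{\ft}(\JJ\sfg^{-1}\JJ^{\ft})^{-1}(\rD_{\xiH}\JJ)\etaH$; substituting this into $\Gamma_{\cH}(\xiH,\etaH)=-(\rD_{\xiH}\Pi_{\cH,\sfg})\etaH+\Pi_{\cH,\sfg}\sfg^{-1}\rK(\xiH,\etaH)$ from \cref{eq:useGammaH} yields \cref{eq:rhess02a} at once.

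For \cref{eq:rhess11a} I would take $\eta=\Pi_{\cH,\sfg}\sfg^{-1}\omega$ in the $\cH$-projected connection $\Pi_{\cH,\sfg}\nabla_{\xiH}\eta=\Pi_{\cH,\sfg}\rD_{\xiH}\eta+\Pi_{\cH,\sfg}\sfg^{-1}\rK(\xiH,\eta)$, which follows from \cref{eq:connection} together with $\Pi_{\cH,\sfg}\Pi_{\cM,\sfg}=\Pi_{\cH,\sfg}$; this is what the left-hand side of \cref{eq:rhess11a} denotes once we recall $\rhessHf^{11}\xiH=\Pi_{\cH,\sfg}\nabla_{\xiH}\rgradHf$ with $\rgradHf=\Pi_{\cH,\sfg}\sfg^{-1}\hatfY$. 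By the product rule $\rD_{\xiH}(\Pi_{\cH,\sfg}\sfg^{-1}\omega)=\bigl(\rD_{\xiH}(\Pi_{\cH,\sfg}\sfg^{-1})\bigr)\omega+\Pi_{\cH,\sfg}\sfg^{-1}\rD_{\xiH}\omega$; after left-multiplying by $\Pi_{\cH,\sfg}$ and using idempotency, the second piece gives the first term of \cref{eq:rhess11a}. For the first piece I would expand $\Pi_{\cH,\sfg}\sfg^{-1}=\sfg^{-1}-\sfg^{-1}\JJ^{\ft}(\JJ\sfg^{-1}\JJ^{\ft})^{-1}\JJ\sfg^{-1}$, differentiate factor by factor using $\rD_{\xiH}(\sfg^{-1})=-\sfg^{-1}(\rD_{\xiH}\sfg)\sfg^{-1}$, and left-multiply by $\Pi_{\cH,\sfg}$; the identity $\Pi_{\cH,\sfg}\sfg^{-1}\JJ^{\ft}=0$ annihilates the two Leibniz terms in which the derivative lands on $(\JJ\sfg^{-1}\JJ^{\ft})^{-1}$ or on the trailing $\JJ\sfg^{-1}$, and what remains is exactly $-\Pi_{\cH,\sfg}\sfg^{-1}(\rD_{\xiH}\sfg)\sfg^{-1}\omega-\Pi_{\cH,\sfg}\bigl(\rD_{\xiH}(\sfg^{-1}\JJ^{\ft})\bigr)(\JJ\sfg^{-1}\JJ^{\ft})^{-1}\JJ\sfg^{-1}\omega$, the second and third terms of \cref{eq:rhess11a}; the $\rK$-term in the projected connection supplies the last term. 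Setting $\omega=\hatfY$ then produces $\rhessHf^{11}\xiH$.

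The main obstacle is the bookkeeping in the second step: one must enumerate every Leibniz term from differentiating $\sfg^{-1}\JJ^{\ft}(\JJ\sfg^{-1}\JJ^{\ft})^{-1}\JJ\sfg^{-1}$, correctly identify which collapse after left-multiplication by $\Pi_{\cH,\sfg}$ via $\Pi_{\cH,\sfg}\sfg^{-1}\JJ^{\ft}=0$, and check that nothing is double-counted when $\Pi_{\cH,\sfg}$ is re-applied using idempotency. The only conceptual ingredient is noticing the identity $\Pi_{\cH,\sfg}\sfg^{-1}\JJ^{\ft}=0$ (equivalently $\JJ\Pi_{\cH,\sfg}=0$, since $\Pi_{\cH,\sfg}$ projects onto $\Null\JJ$ and $\Pi_{\cH,\sfg}\sfg^{-1}$ is self-adjoint by \cref{prop:JN}), which is what makes the derivative-of-the-inverse contributions disappear; the rest is routine matrix-operator calculus. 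One should also be mindful that the $\nabla$ on the left of \cref{eq:rhess11a} is to be read as the connection projected back onto $\cH$, since the un-projected $\nabla_{\xiH}$ carries an extra $\Pi_{\cM,\sfg}$ that need not agree with $\Pi_{\cH,\sfg}$ away from the subbundle.
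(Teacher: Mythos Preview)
Your proposal is correct and follows essentially the same route as the paper's proof: for \cref{eq:rhess02a} you differentiate $\Pi_{\cH,\sfg}=\Id-\sfg^{-1}\JJ^{\ft}(\JJ\sfg^{-1}\JJ^{\ft})^{-1}\JJ$ and use $\JJ\etaH=0$, and for \cref{eq:rhess11a} you expand $\Pi_{\cH,\sfg}\hat{\nabla}_{\xiH}(\Pi_{\cH,\sfg}\sfg^{-1}\omega)$ and kill the extraneous Leibniz terms via $\Pi_{\cH,\sfg}\sfg^{-1}\JJ^{\ft}=0$, exactly as the paper does (in a single sentence). Your closing caveat that the left-hand side of \cref{eq:rhess11a} should be read as $\Pi_{\cH,\sfg}\nabla_{\xiH}$ is on point and matches what the paper actually computes.
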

\begin{proof}
The first expression follows by expanding $\rD_{\xiH}\Pi_{\cH,\sfg}$ in terms of $\JJ$, noting $\JJ\etaH = 0$. For the second, expand $\Pi_{\cH, \sfg}\hat{\nabla}_{\xiH}(\cH, \Pi_{\sfg}\sfg^{-1}\omega)=\Pi_{\cH, \sfg}\rD_{\xiH} (\Pi_{\cH,\sfg}\sfg^{-1}\omega)+ \Pi_{\cH,\sfg}\sfg^{-1}\rK(\xiH, \Pi_{\cH, \sfg}\sfg^{-1}\omega)$, then expand the first term and use $\Pi_{\cH, \sfg}\sfg^{-1}\JJ^{\ft} = 0$.\qed
\end{proof}
Recall (\cite{ONeil1983}, Definition 7.44) a Riemannian submersion   $\pi:\cM\to\cB$ between two manifolds $\cM$ and $\cB$ is a smooth, onto map, such that the differential $d\pi$ is onto at every point $Y\in\cM$, the fiber $\pi^{-1}(b), b\in \cB$ is a Riemannian submanifold of $\cM$, and $d\pi$ preserves scalar products of vectors normal to fibers. An important example is the quotient space by a free and proper action of a group of isometries. At each point $Y\in \cM$, the tangent space of $\pi^{-1}(\pi Y)$ is called the vertical space, and its orthogonal complement with respect to the Riemannian metric is called the horizontal space. The collection of horizontal spaces $\cH_Y$ ($Y\in\cM$) of a submersion is a subbundle $\cH$. The {\it horizontal lift}, identifying a tangent vector $\xi$ at $b=\pi(Y)\in \cB$ with a horizontal tangent vector $\xiH$ at $Y$ is a linear isometry between the tangent space $T_b\cB$ and $\cH_Y$, the horizontal space at $Y$. The following proposition allows us to apply the results so far in familiar contexts:
\begin{proposition}\label{prop:ambAppl}
  Let $(\cM, \sfg, \cE)$ be an embedded ambient structure.
  \begin{itemize}    
\item[1.] Fix an orthogonal basis $e_i$ of $\cE$, let $f$ be a function on $\cM$, which is a restriction of a function $\hatf$ on $\cE$, define $\hatfY$ to be the function from $\cM$ to $\cE$, having the $i$-th component the directional derivative $\rD_{e_i}\hatf$, then $\hatfY$ is an ambient gradient. If $\cM$ is defined by the equation $\bC(\bY) = 0$ ($Y\in \cM$) with a full rank Jacobian, then the nullspace of the Jacobian $\JJ_{\bC}(Y)$ is the tangent space of $\cM$ at $Y$, hence $\JJ_{\bC}(Y)$ could be used as the operator $\JJ(Y)$.
\item[2.] (Riemannian submersion) Let $(\cM, \sfg, \cE)$ be an embedded ambient structure.
  Let $\pi:\cM\to\cB$ be a Riemannian submersion, with $\cH$ the corresponding horizontal subbundle of $T\cM$. If $\xi, \eta$ are two vector fields on $\cB$ with $\xiH, \etaH$ their horizontal lifts, then the Levi-Civita connection $\nabla^{\cB}_{\xi}\eta$ on $\cB$ lifts to $\Pi_{\cH,\sfg}\nabla_{\xiH}\etaH$, hence \cref{eq:useGammaH} applies. Also, Riemannian gradients and Hessians on $\cB$ lift to $\cH$-Riemannian gradients and Hessians on $\cM$.
  \end{itemize}
\end{proposition}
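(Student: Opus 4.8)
For part 1 the plan is simply to unwind definitions. Writing a vector field on $\cM$ as $\eta=\sum_i\eta^ie_i$ in the fixed orthogonal basis, $\KK$-linearity of the directional derivative in its direction argument gives $\rD_{\eta}\hatf=\sum_i\eta^i\rD_{e_i}\hatf=\langle\eta,\hatfY\rangle_{\cE}$; since $f=\hatf|_{\cM}$ and $\eta(Y)\in T_Y\cM$, the chain rule (applied along a curve in $\cM$ with velocity $\eta(Y)$) gives $\rD_{\eta(Y)}f=\rD_{\eta(Y)}\hatf$, so \cref{eq:amb} holds and $\hatfY$ is an ambient gradient. For the Jacobian statement, $\cM=\bC^{-1}(0)$ with $\bC$ of full rank means $0$ is a regular value of $\bC$; by the regular value theorem $\cM$ is an embedded submanifold with $T_Y\cM=\Null(\JJ_{\bC}(Y))$, where $\JJ_{\bC}(Y)$ is the Jacobian (differential) of $\bC$ at $Y$. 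Taking $\cE_{\JJ}$ to be the codomain of $\bC$, the map $\JJ(Y):=\JJ_{\bC}(Y)$ is onto $\cE_{\JJ}$ with nullspace $T_Y\cM=\cH_Y$ (here $\cH=T\cM$), so it satisfies the hypotheses on $\JJ$ in \cref{prop:NJ} and in \cref{prop:rhess}.

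For part 2, I would first invoke \cref{prop:covar}: the connection $\nabla$ of \cref{eq:connection} is the Levi-Civita connection of the metric that $\sfg$ induces on $\cM$, so $\nabla_{\xiH}\etaH$ is a genuine vector field on $\cM$. Next I would identify $\Pi_{\cH,\sfg}$ restricted to $T\cM$ with the Riemannian-orthogonal projection onto the horizontal bundle: for $v\in T_Y\cM$ and $h\in\cH_Y$, the defining property of $\Pi_{\cH,\sfg}$ gives $\langle v-\Pi_{\cH,\sfg}v,h\rangle_{\cE,\sfg}=0$, and since $\Pi_{\cH,\sfg}v\in\cH_Y\subset T_Y\cM$, the difference $v-\Pi_{\cH,\sfg}v$ lies in $T_Y\cM$ and is $\sfg$-orthogonal to $\cH_Y$, i.e. it is exactly the vertical component of $v$. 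Because the horizontal lift of a vector field on $\cB$ is a basic (projectable, horizontal) vector field on $\cM$, the standard Riemannian submersion lemma (see \cite{ONeil1983}, Lemma 7.45) applies: the horizontal part $\Pi_{\cH,\sfg}(\nabla_{\xiH}\etaH)$ of the $\cM$-Levi-Civita derivative of the lifts is the horizontal lift of $\nabla^{\cB}_{\xi}\eta$. Using $\cH_Y\subset T_Y\cM$, hence $\Pi_{\cH,\sfg}\Pi_{\cM,\sfg}=\Pi_{\cH,\sfg}$, this horizontal part equals $\Pi_{\cH,\sfg}\hat{\nabla}_{\xiH}\etaH$, and \cref{eq:useGammaH} is precisely the formula already proved for it in \cref{prop:covar}; since that formula depends only on the pointwise values $\xiH(Y),\etaH(Y)$ (the remark following \cref{prop:covar}), it is legitimate to use \cref{eq:useGammaH} for individual horizontal tangent vectors.

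For the gradient, I would apply the definition of the $\cH$-Riemannian gradient to $f\circ\pi$: it satisfies $\langle\mathsf{rgrad}_{\cH,f\circ\pi},\xiH\rangle_R=\rD_{\xiH}(f\circ\pi)=(\rD_{\xi}f)\circ\pi$; since the horizontal lift is a linear isometry, the horizontal lift of $\rgrad^{\cB}_f$ satisfies the same identity, so uniqueness (nondegeneracy of the metric on $\cH$) forces $\mathsf{rgrad}_{\cH,f\circ\pi}$ to be the horizontal lift of $\rgrad^{\cB}_f$. Then $\Pi_{\cH,\sfg}\nabla_{\xiH}\mathsf{rgrad}_{\cH,f\circ\pi}$ is, by the previous paragraph, the horizontal lift of $\nabla^{\cB}_{\xi}\rgrad^{\cB}_f$, the Riemannian Hessian vector product on $\cB$; pairing with $\etaH$ and using the isometry transfers the identity to the bilinear form $\rhessHf^{02}$. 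The only substantial external ingredient is the submersion lemma of \cite{ONeil1983}; the rest is bookkeeping. The step I expect to need the most care is confirming that $\Pi_{\cH,\sfg}$ genuinely implements the metric-orthogonal horizontal projection and that the horizontal lifts involved are basic vector fields, since together these are exactly what licenses invoking the submersion lemma and the pointwise connection formula \cref{eq:useGammaH}.
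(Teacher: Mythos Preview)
Your proposal is correct and follows essentially the same approach as the paper's proof: part~1 via the definition of $\hatfY$ and the regular value/implicit function theorem, and part~2 via the isometry of the horizontal lift together with O'Neill's submersion lemma (\cite{ONeil1983}, Lemma~7.45). You have simply filled in more detail than the paper's very terse proof, including the explicit verification that $\Pi_{\cH,\sfg}$ restricted to $T\cM$ is the metric-orthogonal horizontal projection and the separate handling of gradients and Hessians.
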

\begin{proof}
  The construction of $\hatfY$ ensures $\langle\hatfY, e_i\rangle_{\cE} = \rD_{e_i}f$. The statement about the Jacobian is simply the implicit function theorem. Isometry of horizontal lift and \cite{ONeil1983}, Lemma 7.45, item 3, gives us Statement 2.\qed
\end{proof}

In practice, $\hatfY$ is computed by index-raising the directional derivative. For clarity, so far we use the subscript $\cH$ to indicate the relation to a subbundle $\cH$. For the rest of the paper, we will drop the subscripts $\cH$ on vector fields
(referring to $\xi$ instead of $\xi_{\cH}$) as it will be clear from the context if we discuss a vector field in $\cH$, or just a regular vector field.
\section{An example}
Let $\cM$ be a submanifold of $\cE$, defined by a  system of equations $\bC(\bx)=0$, where $\bC$ is a map from $\cE$ to $\R^k$ ($x\in\cM$). In this case, $\JJ_{\bC}=\bC_{\bx}$ is the Jacobian of $\bC$, assumed to be of full rank. The projection of $\omega\in\cE$ to the tangent space $T_x\cM$ given by
\begin{equation}
\Pi_{\sfg}\omega = \omega -\bC_{\bx}^{\mathsf{T}}(\bC_{\bx}\bC_{\bx}^{\mathsf{T}})^{-1}\bC_{\bx}\omega
\end{equation}
and the covariant derivative is given by $\nabla_{\xi}\eta= \rD_{\xi}\eta +
\bC_{\bx}^{\mathsf{T}}(\bC_{\bx}\bC_{\bx}^{\mathsf{T}})^{-1}(\rD_\xi\bC_{\bx})\eta$ for two vector fields $\xi, \eta$. With $\Gamma(\xi, \eta) = \bC_{\bx}^{\mathsf{T}}(\bC_{\bx}\bC_{\bx}^{\mathsf{T}})^{-1}(\rD_\xi\bC_{\bx})\eta$, the Riemannian Hessian bilinear form is computed from \cref{eq:rhess02}, and the Riemannian Hessian operator is:
$$\Pi_{\sfg}(\hatf_{xx}\xi-(\rD_\xi\bC_{\bx})^{\mathsf{T}}(\bC_{\bx}\bC_{\bx}^{\mathsf{T}})^{-1}\bC_{\bx}\hatf_x )$$
The expression $(\bC_{\bx}\bC_{\bx}^{\mathsf{T}})^{-1}\bC_{\bx}\hatf_x$ is often used as an estimate for the Lagrange multiplier, this result was discussed in section 4.9 of \cite{Edelman_1999}. When $C(x) = \bx^{\sfT}\bx - 1$ (the unit sphere) $\JJ_{C}\omega = \bx^{\sfT}\omega$, the Riemannian connection is thus $\nabla_{\xi}\eta= \rD_{\xi}\eta +\bx\xi^{\sfT}\eta$, a well-known result.

Our main interest is to study matrix manifolds. As seen, we need to compute $\rN^{\ft}$ or $\JJ^{\ft}$. We will review adjoint operators for basic matrix operations.
\section{Matrix manifolds: inner products and adjoint operators}
\subsection{Matrices and adjoints}\label{subsec:matrix}
We will use the trace (Frobenius) inner product on matrix vector spaces considered here.
Again, the base field $\KK$ is either $\R$ or $\C$. We use the letters $m, n, p$ to denote dimensions of vector spaces. We will prove results for both the real and complex cases together, as often there is a complex result using the Hermitian transpose corresponding to a real result using the real transpose. The reason is when $\C^{n\times m}$, as a real vector space, is equipped with the real inner product $\Real\Tr(ab^{\mathsf{H}})$ (for $a, b\in\C^{n\times m}$, $\mathsf{H}$ is the Hermitian transpose), then the adjoint of the scalar multiplication operator by a complex number $c$, is the multiplication by the conjugate $\bar{c}$.

To fix some notations, we use the symbol $\TrR$ to denote the real part of the trace, so for a matrix $a\in \KK^{n\times n}$, $\TrR a = \Tr a$ if $\KK=\R$ and $\TrR a  = \Real(\Tr a)$ if $\KK=\C$. The symbol $\ft$ will be used on either an operator, where it specifies the adjoint with respect to these inner products, or to a matrix, where it specifies the corresponding adjoint matrix. When $\KK=\R$, we take $\ft$ to be the real transpose, and when $\KK=\C$ we take $\ft$ to be the complex transpose. The inner product of two matrices $a, b$ is $\TrR(ab^{\ft})$. Recall that we denote by $\Herm{\ft}{\KK}{n}$ the space of all $\ft$-symmetric matrices ($A^{\ft} = A$), and $\AHerm{\ft}{\KK}{n}$ the space of all $\ft$-antisymmetric matrices ($A^{\ft} = -A$). We consider both $\Herm{\ft}{\KK}{n}$ and $\AHerm{\ft}{\KK}{n}$ inner product spaces under $\TrR$. We defined the symmetrizer $\sym{\ft}$ and antisymmetrizer $\asym{\ft}$ in \cref{sec:notation}, with the usual meaning.
\begin{proposition}\label{prop:raising} With the above notations, let $A_i, B_i, X$ be matrices such that the functional $L(X)= \sum_{i=1}^k \TrR(A_i X B_i) + \TrR(C_i X^{\ft} D_i)$ is well-formed. We have:\hfill\break
1. The matrix $\xtrace(L, X) = \sum_{i=1}^k A_i^{\ft}B_i^{\ft} + D_i C_i$ is the unique matrix $L_1$ such that $\TrR L_1 X^{\ft}=L(X)$ for all $X\in\KK^{n\times m}$ (this is the gradient of $L$).\hfill\break
2. The matrix  $\xtrace^{\mathrm{sym}}(L, X) = \sym{\ft}(\sum_{i=1}^k A_i^{\ft}B_i^{\ft} + D_i C_i)$ is the unique matrix $L_2\in\Herm{\ft}{\KK}{n}$ satisfying $\TrR(L_2X^{\ft}) = L(X)$ for all $X\in\Herm{\ft}{\KK}{n}$.\hfill\break
3. The matrix $\xtrace^{\mathrm{skew}}(L, X) = \asym{\ft}(\sum_{i=1}^k A_i^{\ft}B_i^{\ft} + D_i C_i)$ is the unique matrix $L_3\in\AHerm{\ft}{\KK}{n}$ satisfying $\TrR(L_3X^{\ft}) = L(X)$ for all $X\in\AHerm{\ft}{\KK}{n}$.
\end{proposition}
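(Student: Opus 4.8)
The plan is to reduce the whole statement to two elementary properties of the real trace: cyclic invariance $\TrR(ab)=\TrR(ba)$, and adjoint invariance $\TrR(M)=\TrR(M^{\ft})$ (valid in the complex case because $\Tr(M^{\mathsf{H}})=\overline{\Tr M}$ and $\TrR$ retains only the real part), together with the fact that $(a,b)\mapsto\TrR(ab^{\ft})$ is a positive-definite real inner product on $\KK^{n\times m}$, since $\TrR(aa^{\ft})=\sum_{i,j}|a_{ij}|^2$. Everything else is bookkeeping.

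For Part 1 I would treat the two kinds of summand separately. For $\TrR(A_iXB_i)$, cyclic invariance gives $\TrR(B_iA_iX)$, adjoint invariance gives $\TrR(X^{\ft}A_i^{\ft}B_i^{\ft})$, and one more application of cyclic invariance puts it in the canonical form $\TrR\big((A_i^{\ft}B_i^{\ft})X^{\ft}\big)$. For $\TrR(C_iX^{\ft}D_i)$, a single cyclic move gives $\TrR\big((D_iC_i)X^{\ft}\big)$. Summing over $i$ yields $L(X)=\TrR(L_1X^{\ft})$ with $L_1=\sum_i A_i^{\ft}B_i^{\ft}+D_iC_i$, valid for all $X\in\KK^{n\times m}$; this is the asserted formula. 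Uniqueness is immediate from nondegeneracy: if $L_1'$ also works then $\TrR\big((L_1-L_1')X^{\ft}\big)=0$ for every $X$, and taking $X=L_1-L_1'$ forces $L_1=L_1'$.

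For Parts 2 and 3 I would first note that under $\TrR(ab^{\ft})$ the subspaces $\Herm{\ft}{\KK}{n}$ and $\AHerm{\ft}{\KK}{n}$ are orthogonal and span $\KK^{n\times n}$ (from $X=\sym{\ft}X+\asym{\ft}X$), so $\sym{\ft}$ and $\asym{\ft}$ are the orthogonal projections onto them — the orthogonality $\TrR(SA^{\ft})=0$ for symmetric $S$ and antisymmetric $A$ again follows from the two trace identities, since $\Tr(SA)$ turns out to be zero (real case) or purely imaginary (complex case). Then for $X\in\Herm{\ft}{\KK}{n}$ we have $X^{\ft}=X$, so by Part 1 $L(X)=\TrR(L_1X^{\ft})$ with the antisymmetric component $\asym{\ft}L_1$ pairing to zero against such $X$; hence $L_2:=\sym{\ft}L_1$ lies in $\Herm{\ft}{\KK}{n}$ and satisfies $\TrR(L_2X^{\ft})=L(X)$ for all symmetric $X$, and uniqueness within $\Herm{\ft}{\KK}{n}$ follows by testing against $X=L_2-L_2'$ (again symmetric). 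Part 3 is the identical argument with $\asym{\ft}$ and $\AHerm{\ft}{\KK}{n}$ in place of $\sym{\ft}$ and $\Herm{\ft}{\KK}{n}$.

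The argument is routine; the only point needing care is consistency in the complex case — using $\ft=\mathsf{H}$ throughout, remembering that $\TrR=\Real\Tr$ is precisely what makes $\TrR(M)=\TrR(M^{\ft})$ and the symmetric/antisymmetric orthogonality hold, and noting that $\Herm{\ft}{\C}{n}$ and $\AHerm{\ft}{\C}{n}$ are \emph{real} subspaces of $\C^{n\times n}$ whose real-linear span is the whole space. I expect this to be the main thing to get right, as there is no conceptual obstacle.
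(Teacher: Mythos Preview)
Your proof is correct and follows essentially the same route as the paper: reduce Part~1 to cyclic and adjoint invariance of $\TrR$ plus nondegeneracy of the Frobenius pairing, then obtain Parts~2 and~3 by observing that only the (anti)symmetric part of $L_1$ pairs nontrivially against (anti)symmetric $X$. Your version is somewhat more explicit about the orthogonal-projection structure of $\sym{\ft}$ and $\asym{\ft}$, but the substance is the same.
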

There is an abuse of notation as $\xtrace(L, X)$ is not a function of two variables, but $X$ should be considered a (symbolic) variable and $L$ is a function in $X$, however, this notation is convenient in symbolic implementation.
\begin{proof} Since $\TrR(A_i X B_i) = \TrR(B_i^{\ft}X^{\ft}A^{\ft}_i) = \TrR(A^{\ft}_iB_i^{\ft}X^{\ft})$ and $\TrR(C_i X^{\ft} D_i) = \TrR( D_iC_i X^{\ft})$, we have $\TrR(\xtrace(L)X^{\ft}) = L(X)$. Uniqueness follows from the fact that $\TrR$ is a non-degenerate bilinear form. The last two statements follow from $\TrR(\xtrace(L)X^{\ft})=\TrR(\xtrace(L)^{\ft})X$ if $X^{\ft} = X$ and $\TrR(\xtrace(L)^{\ft}X)=-\TrR(\xtrace(L)^{\ft})X$ if $X^{\ft} = -X$.
\end{proof}
\begin{remark}\label{rem:xtrace} The index-raising operation\slash gradient $\xtrace$ could be implemented as a symbolic operation on matrix trace expressions, as it involves only linear operations, matrix transpose, and multiplications. It could be used to compute an ambient gradient, for example. For another application, let $\cM$ be a manifold with ambient space $\KK^{n\times m}$, recall $\rhess_f^{02}(\xi, \eta) = \hatfYY(\xi, \eta) -\Gamma(\xi, \eta)$. Assume $\langle\Gamma(\xi, \eta),\hatfY\rangle_{\cE} = \sum_i\TrR(A_i \eta B_i) + \TrR(C_i \eta^{\ft} D_i)$ with $A_i, B_i, C_i, D_i$ are not dependent on $\eta$, and identify tangent vectors with their images in $\KK^{n\times m}$, we have:
  $$\rhess_f^{11}\xi = \Pi_{\sfg}\sfg^{-1}\xtrace(\rhess_f^{02}(\xi, \eta), \eta)$$
as the inner product of the right-hand side with $\eta$ is $\rhess_f^{02}(\xi, \eta)$, and the projection ensures it is in the tangent space. If the ambient space is identified with $\Herm{\ft}{\KK}{n}$, $\AHerm{\ft}{\KK}{n}$ or a direct sum of matrix spaces, we also have similar statements.
\end{remark}
\begin{proposition}\label{prop:adjoint} With the same notations as \cref{prop:raising}:\hfill\break
1. The adjoint of the left multiplication operator by a matrix $A\in \KK^{m\times n}$, sending $X\in \KK^{n\times p}$ to $AX\in\KK^{m\times p}$ is the left multiplication by $A^{\ft}$, sending  $Y\in \KK^{m\times p}$ to $A^{\ft}Y \in \KK^{m\times n}$.\hfill\break
2. The adjoint of the right multiplication operator by a matrix $A\in \KK^{m\times n}$ from $\KK^{p\times m}$ to $\KK^{p\times n}$ is the right multiplication by $A^{\ft}$.\hfill\break
3. The adjoint of the operator sending $X\mapsto X^{\ft}$ for $X\in\KK^{m\times m}$ is again the operator $Y\mapsto Y^{\ft}$ for $Y\in \KK^{m\times m}$. Adjoint is additive, and $(F\circ G)^{\ft} = G^{\ft}\circ F^{\ft}$ for two linear operators $F$ and $G$.\hfill\break
4. The adjoint of the left multiplication operator by $A\in \KK^{p\times n}$ sending $X\in \Herm{\ft}{\KK}{p}$ to $AX\in \KK^{p\times n}$ is the operator sending $Y\mapsto \frac{1}{2}(A^{\ft}Y+Y^{\ft}A)$ for $Y\in \KK^{p\times n}$. Conversely, the adjoint of the operator $Y\mapsto \frac{1}{2}(A^{\ft}Y+Y^{\ft}A)\in \Herm{\ft}{\KK}{p}$ is the operator $X\mapsto AX$.\hfill\break
5. The adjoint of the left multiplication operator by $A\in \KK^{p\times n}$ sending $X\in \AHerm{\ft}{\KK}{p}$ to $AX\in \KK^{p\times n}$ is the operator sending $Y\mapsto \frac{1}{2}(A^{\ft}Y-Y^{\ft}A)$ for $Y\in \KK^{p\times n}$. Conversely, the adjoint of the operator $Y\mapsto \frac{1}{2}(A^{\ft}Y-Y^{\ft}A)\in \AHerm{\ft}{\KK}{p}$ is the operator $X\mapsto AX$.
\hfill\break
6. Adjoint is linear on the space of operators. If $F_1$ and $F_2$ are two linear operators from a space $V$ to two spaces $W_1$ and $W_2$, then the adjoint of the direct sum operator (operator sending $X$ to $\begin{bmatrix}F_1 X & F_2 X\end{bmatrix}$) is the map sending $\begin{bmatrix}A\\ B\end{bmatrix}$ to $F_1^{\ft}A + F_2^{\ft}B$. Adjoint of the map sending $\begin{bmatrix}X_1 \\ X_2 \end{bmatrix}$ to $FX_1$ is the map  $Y\mapsto\begin{bmatrix}F^{\ft}Y \\ 0 \end{bmatrix}$, and more generally a map sending a row block $X_i$ of a matrix $X$ to $FX_i$ is the map sending $Y$ to a matrix where the $i$-th block is $F^{\ft}Y$, and zero outside of this block.
\end{proposition}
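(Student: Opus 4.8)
The plan is to verify each item directly from the defining property of the adjoint, $\langle Fv,w\rangle_W=\langle v,F^{\ft}w\rangle_V$, using only three elementary facts about the trace pairing $\langle a,b\rangle=\TrR(ab^{\ft})$: cyclic invariance $\TrR(MN)=\TrR(NM)$; transpose--invariance $\TrR(M^{\ft})=\TrR(M)$, which holds for the real transpose and, in the complex case, because $\TrR$ takes the real part while Hermitian conjugation conjugates the trace; and non-degeneracy of $\TrR$ on each of $\KK^{n\times m}$, $\Herm{\ft}{\KK}{m}$, $\AHerm{\ft}{\KK}{m}$, which (as already used in the proof of \cref{prop:raising}) guarantees the adjoint is unique once a candidate is exhibited.

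For items 1 and 2 I would just compute: e.g.\ $\langle AX,Y\rangle=\TrR(AXY^{\ft})=\TrR\bigl(X(A^{\ft}Y)^{\ft}\bigr)=\langle X,A^{\ft}Y\rangle$ using $(A^{\ft}Y)^{\ft}=Y^{\ft}A$ and cyclicity, and symmetrically for right multiplication. Item 3 reads $\langle X^{\ft},Y\rangle=\TrR(X^{\ft}Y^{\ft})=\TrR\bigl((YX)^{\ft}\bigr)=\TrR(YX)=\TrR(XY)=\langle X,Y^{\ft}\rangle$, and the additivity of $F\mapsto F^{\ft}$ together with $(F\circ G)^{\ft}=G^{\ft}\circ F^{\ft}$ are the standard properties of adjoints on inner product spaces.

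Items 4 and 5 are the only places needing care, because the domain is restricted to $\Herm{\ft}{\KK}{p}$ (resp.\ $\AHerm{\ft}{\KK}{p}$), so the value of the adjoint must itself lie in that subspace and the naive answer $A^{\ft}Y$ from item 1 has to be corrected. The key observation is that for $X\in\Herm{\ft}{\KK}{p}$ and any $M$, transpose--invariance gives $\TrR(XM)=\TrR(X\,\sym{\ft}M)$, and likewise $\TrR(XM)=\TrR(X\,\asym{\ft}M)$ for $X\in\AHerm{\ft}{\KK}{p}$; applying this with $M=Y^{\ft}A$ to $\langle AX,Y\rangle=\TrR(XY^{\ft}A)$ produces the stated formulas $Y\mapsto\frac12(A^{\ft}Y+Y^{\ft}A)$ and $Y\mapsto\frac12(A^{\ft}Y-Y^{\ft}A)$, the antisymmetric case requiring one to track a sign. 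Each converse statement then follows immediately from $(F^{\ft})^{\ft}=F$ once the domain and codomain are matched. Item 6 is bookkeeping: linearity of $F\mapsto F^{\ft}$ is immediate, for the direct sum operator $\langle FX,(A,B)\rangle=\langle F_1X,A\rangle+\langle F_2X,B\rangle=\langle X,F_1^{\ft}A+F_2^{\ft}B\rangle$, and for a block inclusion one reads the zero block off $\langle FX_1,Y\rangle=\langle(X_1,X_2),(F^{\ft}Y,0)\rangle$.

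The main obstacle --- really the only non-mechanical point --- is getting the symmetrizer/antisymmetrizer correction in items 4 and 5 right, with the correct sign in the antisymmetric case, and making sure the transpose--invariance of $\TrR$ is invoked correctly in the complex (Hermitian) setting rather than merely carried over from the real case.
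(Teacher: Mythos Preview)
Your proposal is correct and follows essentially the same route as the paper: direct verification of the adjoint identity via trace calculus (cyclicity and $\TrR(M^{\ft})=\TrR(M)$), with items 4--5 handled by symmetrizing/antisymmetrizing the naive answer so that it lands in the correct subspace. The paper's own proof is in fact terser than yours---it dispatches items 4 and 5 with ``checked by direct substitution'' and item 6 with a single trace identity---so your write-up, with the explicit reduction $\TrR(XM)=\TrR(X\,\sym{\ft}M)$ for symmetric $X$ and the analogous antisymmetric version, is if anything more complete.
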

Most of the proof is just a simple application of trace calculus. For the first statement, the real case follows from $\Tr(Aab^{\mathsf{T}})=\Tr(a(A^{\mathsf{T}}b)^{\mathsf{T}})$, and $\TrR(Aab^{\mathsf{H}})=\TrR(a(A^{\mathsf{H}}b)^{\mathsf{H}})$ gives us the complex case. Statement 2. is proved similarly, statement 4 is standard. Statements 4. and 5. are checked by direct substitution, and 6. is just the operator version of the corresponding matrix statement, observing for example:
$$\TrR(F_1XA^{\ft} + F_2XB^{\ft}) = \TrR((F_1^{\ft}A + F_2^{\ft}B)X^{\ft})$$
\section{Application to Stiefel manifold}\label{sec:stiefel}
The Stiefel manifold $\St{\KK}{d}{n}$ on $\cE=\KK^{n\times d}$ is defined by the equation $\bY^{\ft}\bY = \bI_{d}$, where the tangent space at a point $\bY$ consists of matrices $\eta$ satisfying $\eta^{\ft}\bY + \bY\eta^{\ft} = 0$. We apply the results of \cref{sec:ambient} for the full tangent bundle $\cH=T\St{\KK}{d}{n}$. We can consider an ambient metric:
\begin{equation}
  \label{eq:stief_metric}
  \sfg(Y) \omega = \alpha_0\omega + (\alpha_1-\alpha_0)\bY\bY^{\ft}\omega = \alpha_0(I_n - \bY\bY^{\ft})\omega + \alpha_1 \bY\bY^{\ft}\omega
\end{equation}
for $\omega\in\cE=\KK^{n\times d}$. It is easy to see $\omega_0-\bY\bY^{\ft}\omega_0$ is an eigenvector of $\sfg(Y)$ with eigenvalue $\alpha_0$,  and $\bY\bY^{\ft}\omega_1$ is an eigenvector with eigenvalue $\alpha_1$, for any $\omega_0,\omega_1\in \cE$, and these are the only eigenvalues and vectors. Hence, $\sfg(Y)^{-1}\omega = 
\alpha_0^{-1}(I_n - YY^{\ft})\omega + \alpha^{-1}_1\bY\bY^{\ft}\omega$
and $\sfg$ is a Riemannian metric if $\alpha_0, \alpha_1$ are positive. We can describe the tangent space as a nullspace of $\JJ(Y)$ with $\JJ(Y)\omega = \omega^{\ft}\bY + \bY^{\ft}\omega\in \cE_{\JJ}:=\Herm{\ft}{\KK}{d}$. We will evaluate everything at $Y$, so we will write $\JJ$ and $\sfg$ instead of $\JJ(Y)$ and $\sfg(Y)$, etc. By \cref{prop:adjoint}, $\JJ^{\ft}\ba = (a\bY^{\ft})^{\ft} + \bY\ba=2\bY a$ for $\ba\in \cE_{\JJ}$. We have $\sfg^{-1}\JJ^{\ft}\ba=\alpha_0^{-1}2\bY\ba +(\alpha_1^{-1}-\alpha_0^{-1})2\bY\ba = 2\alpha_1^{-1}\bY\ba$. Thus $\JJ\sfg^{-1}\JJ^{\ft}\ba = \JJ(2\alpha_1^{-1}\bY\ba) =4\alpha_1^{-1}\ba$ and by \cref{prop:JN}:
\begin{equation}
\label{eq:st_proj_upper}
\Pi_{\sfg}\nu = \Pi_{\sfg}\nu = \nu -  2\alpha_1^{-1}\frac{\alpha_1}{4}(\bY\nu^{\ft}\bY + \bY\bY^{\ft}\nu)=\nu - \frac{1}{2}(\bY\nu^{\ft}\bY + \bY\bY^{\ft}\nu)\end{equation}
In this case, the ambient gradient $\hatfY$ is the matrix of partial derivatives of an extension of $f$ on the ambient space $\KK^{n\times d}$. More conveniently, using the eigenspaces of $\sfg$, $\Pi_{\sfg}\nu = (\bI_n-\bY\bY^{\ft})v + Y\asym{\ft}Y^{\ft}v$ and $\sfg^{-1}\hatfY = \alpha_0^{-1}(\bI_n-\bY\bY^{\ft})\hatfY +\alpha_1^{-1}\bY\bY^{\ft}\hatfY$
$$\begin{gathered}\Pi_{\sfg}\sfg^{-1} \hatfY =\alpha_0^{-1}(I_n - YY^{\ft})\hatfY +\alpha_1^{-1}Y\asym{\ft}(Y^{\ft}\hatfY)\\= \alpha_0^{-1}\hatfY+\frac{\alpha_1^{-1}-2\alpha_0^{-1}}{2}\bY\bY^{\ft}\hatfY -\frac{\alpha_1^{-1}}{2}\bY \hatfY^{\ft}\bY
\end{gathered}$$
If $\xi$ and $\eta$ are vector fields, $(\rD_{\xi}\sfg) \eta = (\alpha_1-\alpha_0)(\xi\bY^{\ft}+\bY\xi^{\ft})\eta$. Using \cref{prop:raising}, we can take the cross term $\cX(\xi, \eta) = (\alpha_1-\alpha_0)(\xi\eta^{\ft}+\eta\xi^{\ft})\bY$, thus:
$$\rK(\xi, \eta) = \frac{\alpha_1 - \alpha_0}{2}((\xi\bY^{\ft}\eta +\eta\bY^{\ft}\xi)+\bY(\xi^{\ft}\eta+\eta^{\ft}\xi) -(\xi\eta^{\ft}+\eta\xi^{\ft})\bY)$$
By the tangent condition, $(\xi\bY^{\ft}\eta +\eta\bY^{\ft}\xi)= -(\xi\eta^{\ft}+\eta\xi^{\ft})\bY$, hence $\rK(\xi, \eta) = \frac{\alpha_1-\alpha_0}{2}F$ with $F = Y(\xi^{\ft}\eta + \eta^{\ft}\xi) - 2(\xi^{\ft}\eta + \eta^{\ft}\xi)Y$, we see $Y^{\ft}F$ is symmetric so $\asym{\ft}Y^{\ft}F = 0$, therefore
$$\Pi_{\sfg}\sfg^{-1}F = \alpha_0^{-1}(I_n-YY^{\ft})F = - 2\alpha_0^{-1}(I_n-YY^{\ft})(\xi^{\ft}\eta + \eta^{\ft}\xi)Y$$
\begin{equation}\label{eq:pisfgK}\Pi_{\sfg}\sfg^{-1}\rK(\xi, \eta)=\frac{\alpha_0-\alpha_1}{\alpha_0}(\bI_n-\bY\bY^{\ft})(\xi\eta^{\ft}+\eta\xi^{\ft})\bY
\end{equation}
Using $\sfg^{-1}\JJ^{\ft}(\JJ\sfg^{-1}\JJ^{\ft})^{-1}(\rD_\xi\JJ)\eta = \frac{1}{2}\bY(\xi^{\ft}\eta+\eta^{\ft}\xi)$ to evaluate \cref{eq:rhess02a}, the connection for two vector fields $\xi, \eta$ is:
\begin{equation}
  \label{eq:stiefel_connect}
\nabla_{\xi}\eta= \rD_{\xi}\eta +\frac{1}{2}\bY(\xi^{\ft}\eta+\eta^{\ft}\xi) +\frac{\alpha_0-\alpha_1}{\alpha_0}(\bI_n-\bY\bY^{\ft})(\xi\eta^{\ft}+\eta\xi^{\ft})\bY
\end{equation}
With $\Pi_0 = (\bI_n-\bY\bY^{\ft})$ and let $\hatfYY$ be the ambient Hessian, from \cref{eq:rhess02}:
\begin{equation}
\rhess^{02}_f(\xi, \eta)=\hatfYY(\xi,\eta) -\TrR(\hatfY^{\ft}\{\frac{1}{2}\bY(\xi^{\ft}\eta+\eta^{\ft}\xi) +\frac{\alpha_0-\alpha_1}{\alpha_0}\Pi_0(\xi\eta^{\ft}+\eta\xi^{\ft})\bY\})
\end{equation}
$\rhess^{11}_f\xi$ is $\Pi_{\sfg}\sfg^{-1}\xtrace(\rhess^{02}_{\xi, \eta}, \eta)$ by \cref{rem:xtrace}:
\begin{equation}
\rhess^{11}_f\xi =\Pi_{\bY, \sfg}\sfg^{-1}(\hatfYY\xi -\frac{1}{2}\xi(\hatfY^{\ft}\bY +\bY^{\ft} \hatfY) -\frac{\alpha_0-\alpha_1}{\alpha_0}(\Pi_0 \hatfY\bY^{\ft}+\bY \hatfY^{\ft}\Pi_0)\xi)
\end{equation}
We note the term inside $\Pi_{\bY, \sfg}\sfg^{-1}$ is not unique as it can be modified by any expression sent to zero by $\Pi_{\bY, \sfg}\sfg^{-1}$. The case $\alpha_0 = 1, \alpha_1=\frac{1}{2}$ correspond to the canonical metric on a Stiefel manifold, where the connection is given by formula 2.49 of \cite{Edelman_1999}, in a slightly different form, but we could show they are the same by noting $\bY\bY^{\ft}(\xi\eta^{\ft}+\eta\xi^{\ft})\bY=\bY(\xi^{\ft}\bY\bY^{\ft}\eta + \eta^{\ft}\bY\bY^{\ft}\xi)$ using the tangent constraint. The case $\alpha_0 = \alpha_1 = 1$ corresponds to the constant trace metric where we do not need to compute $\rK$. This family of metrics has been studied in \cite{ExtCurveStiefel}, where a closed-form geodesic formula is provided. In \cite{NguyenGeodesic} we also provide computationally efficient closed-form geodesic formulas similar to those in \cite{Edelman_1999}.
\section{Quotients of a Stiefel manifold and flag manifolds}\label{sec:flag_manifold}
We will treat families of quotients of a Stiefel manifold, slightly more general than flag manifolds. Background materials for optimization on flag manifolds are in \cite{Nishi,YeLim}, but the review below should be sufficient to understand the setup and the results. We generalize the formula for $\rhess^{02}$ in \cite[Proposition 25]{YeLim} to the full family of metrics in \cite{ExtCurveStiefel} and provide a formula for $\rhess^{11}$.

Continuing with the setup in the previous section, consider a Stiefel manifold $\St{\KK}{d}{n}$ (we will assume $0 <d < n$). The metric induced by the operator $\sfg$ in \cref{eq:stief_metric}, $\alpha_0\TrR\omega_1^{\ft}\omega_2 + (\alpha_1-\alpha_0)\Tr\omega_1^{\ft}YY^{\ft}\omega_2$ with $Y\in\St{\KK}{d}{n}$, $\omega_1, \omega_2\in\cE$ is preserved if we replace $Y, \omega_1, \omega_2$ by $YU, \omega_1U, \omega_2U$, for $U^{\ft}U = I_d$, or if we define the $\ft$-orthogonal group by $\UU{\KK}{d} := \{U\in \KK^{d\times d}| U^{\ft}U = I_d\}$ then this is a group of isometries of $\sfg$. Therefore, any subgroup $G$ of $\UU{\KK}{d}$ acts on $\St{\KK}{d}{n}$ by right-multiplication also preserves the metric, and if $G$ is compact, we can consider the quotient manifold $\St{\KK}{d}{n}/G$, identifying $Y\in\St{\KK}{d}{n}$ with $YU$ for $U\in G$. The quotient manifold is useful for optimization problems with cost functions invariant under the action of the group $G$. In that case, instead of optimization over $\St{\KK}{d}{n}$, we can optimize over $\St{\KK}{d}{n}/G$. The case of the Rayleigh quotient cost function $\Tr Y^{\ft}A Y$ for $Y\in\St{\KK}{p}{n}$, $A$ is a positive-definite matrix is well-known, as the cost function is invariant under $G=\UU{\KK}{d}$, we can optimize over the Grassmann manifolds $\St{\KK}{d}{n}/G$. This section deals with the optimization of cost functions with a smaller group of symmetries $G$ described below.

We will consider the case where $G$ is a group of block-diagonal matrices. Assume there is a sequence of positive integers $\bdh = \{d_1,\cdots, d_q\}$, $d_i > 0$ for $1\leq i\leq q$, such that $\sum_{i=1}^q d_i \leq d$. Set $d_{q+1} = d - \sum_{i=1}^q d_i$, thus $d_{q+1}\geq 0$. This sequence allows a partition of a matrix $A\in\KK^{d\times d}$ to $(q+1)\times (q+1)$ blocks $A_{[ij]}\in\KK^{d_i\times d_j}$, $1\leq i, j\leq q+1$. The right-most or bottom blocks correspond to $i$ or $j$ equals to $q+1$ are empty when $d_{q+1} = 0$. Consider the subgroup $G=\UU{\KK}{\bdh}= \UU{\KK}{d_1}\times \UU{\KK}{d_2}\cdots\times \UU{\KK}{d_q}\times \{I_{d_{q+1}}\}$ of $\UU{\KK}{d}$ of block-diagonal matrices $U$, with the $i$-th diagonal block from the top $U_{[ii]}\in\UU{\KK}{d_i}$, $1\leq i\leq q$, and $U_{[q+1, q+1]} = I_{d_{q+1}}$. An element $U\in\UU{\KK}{\bdh}$ has the form
$$ U =\diag(U_{[11]}, \cdots, U_{[qq]}, I_{d_{q+1}})\text{ for } U_{[ii]}\in \UU{\KK}{d_i}, 1\leq i\leq q$$
When $q=0$, we define $\bdh=\emptyset$ and $\UU{\KK}{\bdh}= \{I_{d}\}$. We will consider the manifold $\St{\KK}{d}{n}/G$ for $G=\UU{\KK}{\bdh}$. Thus, when $\bdh = \emptyset$, this quotient is the Stiefel manifold $\St{\KK}{d}{n}$ itself, when $\bdh = \{d\}$, it is the Grassmann manifold. When $d_{q+1} = 0$ i.e. $\sum_{i=1}^q d_i = d$, the quotient is called a flag manifold, denoted by $\Flag(d_1,\cdots, d_q; n, \KK)$. Thus, these quotients could be considered as intermediate objects between a Stiefel and a Grassmann manifold, as we will soon see more clearly.

Define the operator $\fsym$ acting on $\KK^{d\times d}$, sending $A\in \KK^{d\times d}$ to $A_{\fsym}$ such that $(A_{\fsym})_{[ij]} = \frac{1}{2}(A_{[ij]} +A_{[ji]}^{\ft})$ if $1\leq i\neq j\leq q+1$ or $i=j=q+1$, and $(A_{\fsym})_{[ii]} = A_{[ii]}$ if $1 \leq i\leq q$. Thus, $\fsym$ preserves the diagonal blocks for $1\leq i \leq q$, but symmetrizes the off-diagonal blocks and the $q+1$-th diagonal block. The following illustrates the operation when $q=2$ for $A = A_{[ij]}\in \KK^{d\times d}$.
$$A_{\fsym} = \frac{1}{2}\begin{bmatrix}2A_{[11]} & A_{[12]} + A_{[21]}^{\ft} & A_{[13]} + A_{[31]}^{\ft} \\
  A_{[21]} + A_{[12]}^{\ft}  & 2A_{[22]} & A_{[23]} + A_{[32]}^{\ft}\\
  A_{[31]} + A_{[13]}^{\ft} & A_{[32]} + A_{[23]}^{\ft} & A_{[33]} + A_{[33]}^{\ft}
  \end{bmatrix}
$$
For the case $\bdh=\emptyset$ of the full Stiefel manifold, $\fsym$ is just $\sym{\ft}$ and for the case $\bdh=\{d\}$ of the Grassmann manifold, $\fsym$ is the identity map. We show these quotients, including Stiefel, Grassmann and flag manifolds, share similiar Riemannian optimization settings.
\begin{theorem}With the metric in \cref{eq:stief_metric}, the horizontal space $\cH_Y$ at $Y\in\St{\KK}{d}{n}$ of the quotient $\St{\KK}{d}{n}\to\St{\KK}{d}{n}/\UU{\KK}{\bdh}$ consists of matrices $\omega \in \cE:=\KK^{n\times d}$ such that
  \begin{equation}  (Y^{\ft}\omega)_{\fsym} = 0\end{equation}
    or equivalently, $Y^{\ft}\omega$ is $\ft$-antisymmetric with first $q$ diagonal blocks $((Y^{\ft}\omega)_{\fsym})_{[ii]}$ vanish for $1\leq i\leq q$. For $\omega\in\cE=\KK^{n\times d}$, the projection $\Pi_{\cH}$ from $\cE$ to $\cH_Y$ and the Riemannian gradient are given by
\begin{gather}  \Pi_{\cH}\omega = \omega - Y(Y^{\ft}\omega)_{\fsym}\label{eq:flag_proj}\\
  \Pi_{\cH}\sfg^{-1}\hatfY = \alpha_0^{-1}\hatfY + (\alpha_1^{-1} - \alpha_0^{-1})YY^{\ft}\hatfY - \alpha_1^{-1}Y(Y^{\ft}\hatfY)_{\fsym}\label{eq:flag_rgrad}
\end{gather}
Let $\Pi_0 = I_n - YY^{\ft}$. For two vector fields $\xi, \eta$, the horizontal lift of the Levi-Civita connection and Riemannian Hessians are given by
\begin{gather}
  \label{eq:flag_connect}
\Pi_{\cH}\nabla_{\xi}\eta= \rD_{\xi}\eta +\bY(\xi^{\ft}\eta)_{\fsym} +\frac{\alpha_0-\alpha_1}{\alpha_0}\Pi_0(\xi\eta^{\ft}+\eta\xi^{\ft})\bY\\
\rhess^{02}_f(\xi, \eta)=\hatfYY(\xi,\eta) -\TrR(\hatfY^{\ft}\{\bY(\xi^{\ft}\eta)_{\fsym} +\frac{\alpha_0-\alpha_1}{\alpha_0}\Pi_0(\xi\eta^{\ft}+\eta\xi^{\ft})\bY\})\label{eq:flag_rhess02}\\
\rhess^{11}_f\xi =\Pi_{\cH}\sfg^{-1}(\hatfYY\xi -\xi(\bY^{\ft}\hatfY)_{\fsym} -\frac{\alpha_0-\alpha_1}{\alpha_0}(\Pi_0 \hatfY\bY^{\ft}+\bY \hatfY^{\ft}\Pi_0)\xi)\label{eq:flag_rhess11}
\end{gather}
\end{theorem}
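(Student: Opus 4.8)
The plan is to realize the horizontal bundle $\cH$ as $\Null(\JJ)$ for an explicit first-order operator $\JJ(Y)$, verify the hypotheses of \cref{prop:NJ}, \cref{prop:covar} and \cref{prop:rhess}, and then feed everything through those results, reusing the Christoffel computation of \cref{sec:stiefel}. The preliminary step is to pin down the algebra of $\fsym$: it is an $\R$-linear idempotent on $\KK^{d\times d}$ that commutes with $\ft$ and is self-adjoint for the real trace form. Idempotency and $\ft$-commutativity are checked block by block; self-adjointness is checked by separately treating the diagonal blocks $1,\dots,q$ (where $\fsym$ is the identity), the $(q{+}1)$-th diagonal block (where $\fsym=\sym{\ft}$, already self-adjoint), and each off-diagonal pair $(i,j),(j,i)$ with a short cyclic-trace manipulation. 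Put $\cE_{\JJ}:=\fsym(\KK^{d\times d})$ and $\JJ(Y)\omega:=(Y^{\ft}\omega)_{\fsym}$. Since $\omega\mapsto Y^{\ft}\omega$ is onto $\KK^{d\times d}$ (it maps $Ya$ to $a$) and $\fsym$ is onto $\cE_{\JJ}$, $\JJ(Y)$ is onto $\cE_{\JJ}$; it is affine in $Y$, hence $C^{\infty}$, so \cref{prop:rhess} will apply.

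Next I would identify $\Null(\JJ(Y))$ with the horizontal space. The vertical space at $Y$ is the tangent space to the orbit $Y\,\UU{\KK}{\bdh}$, i.e. $\{YB\}$ with $B$ in the Lie algebra of $\UU{\KK}{\bdh}$ — the block-diagonal, $\ft$-antisymmetric matrices whose $(q{+}1)$-th diagonal block vanishes. Using $Y^{\ft}Y=I_d$ one gets $\sfg(YB)=\alpha_1 YB$, hence $\langle\omega,\sfg(YB)\rangle_{\cE}=\alpha_1\TrR(B^{\ft}Y^{\ft}\omega)$; so $\omega$ is horizontal iff $Y^{\ft}\omega$ is $\ft$-antisymmetric (the tangency condition for $\St{\KK}{d}{n}$) and Frobenius-orthogonal to that Lie algebra, which for a $\ft$-antisymmetric matrix means precisely that its first $q$ diagonal blocks vanish. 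Since a matrix $M$ satisfies $M_{\fsym}=0$ exactly when $M$ is $\ft$-antisymmetric with vanishing first $q$ diagonal blocks, this gives $\cH_Y=\Null(\JJ(Y))$ and, in passing, that $G=\UU{\KK}{\bdh}$ acts freely and (being compact) properly by isometries of $\sfg$; by the second part of \cref{prop:ambAppl} the quotient is a Riemannian submersion, so \cref{eq:useGammaH}, \cref{eq:rhess02} and the constructions of \cref{prop:rhess} and \cref{rem:xtrace} all apply on $\cH$ and compute the horizontal lifts of the connection, Hessian bilinear form and Hessian operator on $\St{\KK}{d}{n}/G$.

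For the projection and gradient I would compute, via \cref{prop:adjoint} and self-adjointness of $\fsym$, that $\JJ^{\ft}a=Ya$ for $a\in\cE_{\JJ}$, then $\sfg^{-1}\JJ^{\ft}a=\alpha_1^{-1}Ya$ from the eigenstructure of $\sfg$, whence $\JJ\sfg^{-1}\JJ^{\ft}=\alpha_1^{-1}\mathrm{Id}_{\cE_{\JJ}}$; substituting into \cref{prop:NJ} yields \cref{eq:flag_proj}, and applying $\Pi_{\cH}$ to $\sfg^{-1}\hatfY$ after simplifying $Y^{\ft}\sfg^{-1}\hatfY=\alpha_1^{-1}Y^{\ft}\hatfY$ yields \cref{eq:flag_rgrad}. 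For the connection I would use \cref{eq:rhess02a}: $\rD_{\xi}\JJ$ acts by $\eta\mapsto(\xi^{\ft}\eta)_{\fsym}$, so its first summand equals $Y(\xi^{\ft}\eta)_{\fsym}$; the second summand $\Pi_{\cH}\sfg^{-1}\rK(\xi,\eta)$ need not be recomputed, because $\cH_Y\subset T_Y\St{\KK}{d}{n}$ gives $\Pi_{\cH}=\Pi_{\cH}\Pi_{\sfg}$, and the Stiefel-tangent value $\Pi_{\sfg}\sfg^{-1}\rK(\xi,\eta)=\tfrac{\alpha_0-\alpha_1}{\alpha_0}(I_n-YY^{\ft})(\xi\eta^{\ft}+\eta\xi^{\ft})Y$ of \cref{eq:pisfgK} is already annihilated by $Y^{\ft}$, hence fixed by $\Pi_{\cH}$. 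This gives $\Gamma_{\cH}$ and \cref{eq:flag_connect}, and \cref{eq:rhess02} then gives \cref{eq:flag_rhess02}. Finally \cref{eq:flag_rhess11} follows from the subbundle version of \cref{rem:xtrace}, $\rhess^{11}_f\xi=\Pi_{\cH}\sfg^{-1}\xtrace(\rhess^{02}_f(\xi,\eta),\eta)$, raising the $\eta$-index termwise: the quadratic terms in $\Pi_0=I_n-YY^{\ft}$ raise routinely to $-\tfrac{\alpha_0-\alpha_1}{\alpha_0}(\Pi_0\hatfY Y^{\ft}+Y\hatfY^{\ft}\Pi_0)\xi$, while $\TrR(\hatfY^{\ft}Y(\xi^{\ft}\eta)_{\fsym})$ is handled by first moving $\fsym$ onto $Y^{\ft}\hatfY$ (self-adjointness plus $\ft$-commutativity), after which it raises to $-\xi(Y^{\ft}\hatfY)_{\fsym}$.

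I expect the genuinely delicate part to be the bookkeeping with $\fsym$ — establishing that it is a self-adjoint, $\ft$-commuting projection onto $\cE_{\JJ}$, and then carrying it correctly through $\JJ^{\ft}$, through $\rD_{\xi}\JJ$, and above all through the index-raising step for $\rhess^{11}$. Once those $\fsym$-identities are in place, the remainder is a direct substitution into the already-proved operator formulas and into the Stiefel computations of \cref{sec:stiefel}.
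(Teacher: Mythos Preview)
Your proposal is correct and follows essentially the same route as the paper: identify $\cH_Y$ via the vertical space $\{YB\}$ and $\sfg(YB)=\alpha_1 YB$, set $\JJ\omega=(Y^{\ft}\omega)_{\fsym}$, compute $\JJ^{\ft}a=Ya$ and $\JJ\sfg^{-1}\JJ^{\ft}=\alpha_1^{-1}\mathrm{Id}$, reuse \cref{eq:pisfgK} through $\Pi_{\cH}=\Pi_{\cH}\Pi_{\sfg}$, and obtain $\rhess^{11}$ by index-raising after moving $\fsym$ onto $Y^{\ft}\hatfY$. The only cosmetic difference is that the paper computes the connection term $Y(\xi^{\ft}\eta)_{\fsym}$ from $-(\rD_{\xi}\Pi_{\cH})\eta$ via \cref{eq:useGammaH}, whereas you compute it from $\sfg^{-1}\JJ^{\ft}(\JJ\sfg^{-1}\JJ^{\ft})^{-1}(\rD_{\xi}\JJ)\eta$ via \cref{eq:rhess02a}; these are equivalent by \cref{prop:rhess} and yield the same expression.
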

\begin{proof}First we note that $\fsym$ is a self-adjoint operator, as both the identity
 operator on the first $q$ diagonal blocks and symmetrize operator on the remaining blocks are self-adjoint. The orbit of $Y\in\St{\KK}{d}{n}$ under the action of $\UU{\KK}{\bdh}$ is $Y\UU{\KK}{\bdh}$, thus the vertical space consists of matrices of the form $YD$ with $D$ is block-diagonal, $\ft$-skewsymmetric and $D_{[(q+1),(q+1)]} = 0$. Since $\sfg YD = \alpha_1 YD$, a horizontal vector $\omega$ satisfies $\asym{\ft}(Y^{\ft}\omega) = 0$ and $\TrR(\omega^{\ft}YD) = 0$. This shows $Y^{\ft}\omega$ has zero first $q$ diagonal blocks, hence $(Y^{\ft}\omega)_{\fsym} = 0$.

For the projection, we proceed like the Stiefel case, with the map $\JJ\omega = (Y^{\ft}\omega)_{\fsym}$, mapping $\cE$ to $\cE_{\JJ}= \{ A\in \KK^{d\times d}| A_{[ij]} =A_{[ji]}^{\ft}, 1\leq i \neq j\leq q+1 \text{ or } i=j=q+1\}$. Since $\fsym$ is self-adjoint, $\JJ^{\ft} A = YA_{\fsym} = YA$ for $A\in \cE_{\JJ}$. From here we get $(\JJ\sfg^{-1}\JJ^{\ft})A = \alpha_1^{-1}A$ and \cref{eq:flag_proj} follows. Equation \ref{eq:flag_rgrad} is a substitution of $\sfg^{-1}\hatfY$ to \cref{eq:flag_proj}, noting $(Y^{\ft}(\sfg^{-1}\hatfY))_{\fsym} = \alpha_1^{-1}(Y^{\ft}\hatfY)_{\fsym}$, using the eigen decomposition of $\sfg$.

For the Levi-Civita connection, we use \cref{eq:useGammaH}. For two horizontal vector fields $\xi, \eta$, $(\rD_{\xi}\Pi_{\cH})\eta = -\xi(Y^{\ft}\eta)_{\fsym} - Y(\xi^{\ft}\eta)_{\fsym}=-Y(\xi^{\ft}\eta)_{\fsym}$ and $\Pi_{\cH}=\Pi_{\cH}\Pi$, where $\Pi$ is the Stiefel projection \cref{eq:st_proj_upper}, hence $\Pi_{\cH}\sfg^{-1}\rK(\xi, \eta) = \Pi\sfg^{-1}\rK(\xi, \eta) - Y(Y^{\ft}\Pi\sfg^{-1}\rK(\xi, \eta))_{\fsym}$. The last term vanishes from \cref{eq:pisfgK}, and we have \cref{eq:flag_connect}.

Equation (\ref{eq:flag_rhess02}) follows from \cref{eq:rhess02}. We derive \cref{eq:flag_rhess11} from \cref{rem:xtrace} and self-adjointness of $\fsym$, expanding $\TrR\hatfY Y(\xi^{\ft}\eta)_{\fsym}$ to
$$\TrR(Y^{\ft}\hatfY)^{\ft}(\xi^{\ft}\eta)_{\fsym} = \TrR(Y^{\ft}\hatfY)_{\fsym}(\xi^{\ft}\eta)^{\ft}=\TrR\xi(Y^{\ft}\hatfY)_{\fsym}\eta^{\ft}$$
\end{proof}
\section{Positive-definite matrices}
Consider the manifold $\Sd{\KK}{n}$ of ${\ft}$-symmetric\break positive-definite matrices in $\KK^{n\times n}$. In our approach, we take $\cE=\KK^{n\times n}$ with its Frobenius inner product. The metric is $\langle \xi, \sfg\eta\rangle_{\cE} = \Tr(\xi\bY^{-1}\eta\bY^{-1})$, with the metric operator $\sfg:\eta\mapsto \bY^{-1}\eta\bY^{-1}$ for two vector fields $\xi, \eta$. The full tangent bundle $\cH=T\Sd{\KK}{n}$ is identified fiber-wise with the nullspace of the operator $\JJ:\eta\mapsto \JJ\eta = \eta - \eta^{\ft}$, with $\cE_{\JJ} = \AHerm{\ft}{\KK}{n}$. By item 5 in \cref{prop:adjoint}, we have $\JJ^{\ft} \ba = 2\ba$ where $a$ is a $\ft$-antisymmetric matrix. From here $\JJ\sfg^{-1}\JJ^{\ft}\ba = 4\bY\ba\bY$ and (write $\Pi$ for $\Pi_{\sfg}$):
$$\Pi\eta = \eta - \sfg^{-1}\JJ^{\ft}(\JJ\sfg^{-1}\JJ^{\ft})^{-1}\JJ\eta =\eta- 2\bY(\frac{1}{4}\bY^{-1}(\eta - \eta^{\ft})\bY^{-1})\bY = \frac{1}{2}(\eta+\eta^{\ft})=\sym{\ft}\eta$$
Thus, the Riemannian gradient is $\Pi\sfg^{-1}\hatfY=\frac{1}{2}\bY(\hatfY+\hatfY^{\ft})\bY$. Next, we compute
$(\rD_{\xi}\sfg)\eta = \rD_{\xi|\eta \text{ constant}}(\bY^{-1}\eta\bY^{-1}) = -\bY^{-1}\xi\bY^{-1}\eta\bY^{-1} - \bY^{-1}\eta\bY^{-1}\xi\bY^{-1}$, where we keep $\eta$ constant in the derivative, as we evaluate $\rD_{\xi}\sfg$ as an operator-valued function. From here, $(\rD_{\eta}\sfg)\xi =(\rD_{\xi}\sfg)\eta$. We note for three vector fields $\xi, \eta, \xi_0$
$$\Tr((\bY^{-1}\xi_0\bY^{-1}\eta\bY^{-1} + \bY^{-1}\eta\bY^{-1}\xi_0\bY^{-1})\xi)=\Tr(\xi_0(\bY^{-1}\eta\bY^{-1}\xi\bY^{-1} + \bY^{-1}\xi\bY^{-1}\eta\bY^{-1}))$$
Thus, we can take $\cX(\xi, \eta)=-\bY^{-1}\eta\bY^{-1}\xi\bY^{-1} - \bY^{-1}\xi\bY^{-1}\eta\bY^{-1}$ and
$$\Gamma(\xi, \eta) = -(\rD_{\xi}\Pi)\eta - \frac{1}{2}Y(\bY^{-1}\eta\bY^{-1}\xi\bY^{-1} + \bY^{-1}\xi\bY^{-1}\eta\bY^{-1})Y$$
\begin{equation}
\nabla_{\xi}\eta = \rD_{\xi}\eta -\frac{1}{2}(\xi\bY^{-1}\eta + \eta\bY^{-1}\xi)
\end{equation}
Thus the Riemannian Hessian bilinear form $\rhess^{02}(\xi, \eta)$ is 
\begin{equation}
\hatfYY(\xi, \eta) + \frac{1}{2}\Tr((\xi\bY^{-1}\eta + \eta\bY^{-1}\xi) \hatfY) = \hatfYY(\xi, \eta) + \frac{1}{2}\Tr((\hatfY \xi\bY^{-1} + \bY^{-1}\xi\hatfY)\eta)
\end{equation}
Using a symmetric version of \cref{rem:xtrace}, $\rhess^{11}_f\xi = \Pi_{\sfg}\sfg^{-1}\sym{\ft}(\hatfYY\xi + \frac{1}{2}(\hatfY \xi\bY^{-1} + \bY^{-1}\xi\hatfY))$. We get the following formula, as in \cite{manopt}:
\begin{equation}
\rhess_f^{11}\xi = \bY\sym{\ft}(\hatfYY \xi)\bY + \sym{\ft}(\xi\sym{\ft}(\hatfY) \bY)
\end{equation}
\section{A family of metrics for the manifold of positive-semidefinite matrices of fixed rank} \label{sec:psd}
In \cite{BonSel}, the authors defined a family of metrics on the manifold $\Sp{\KK}{p}{n}$ of positive-semidefinite matrices of size $n$ and rank $p$ for the case $\KK = \R$. Each such matrix will have the form $YPY^{\ft}$ with $Y\in \St{\KK}{p}{n}$ ($Y^{\ft}Y=I_p$) and $P$ is positive-definite of size $p\times p$, up to the equivalent relation $(Y, P)\sim (YU, U^{\ft}PU)$ for a matrix $U\in\UU{\KK}{p}$, (that means $U\in\KK^{p\times p}$ and $UU^{\ft} = I_p$). So the manifold $\Sp{\KK}{p}{n}$ could be identified with the quotient space $(\St{\KK}{p}{n}\times\Sd{\KK}{p})/\UU{\KK}{p}$ of the product of the Stiefel manifold $\St{\KK}{p}{n}$ and the manifold of positive-definite matrices $\Sd{\KK}{p}$ over the $\ft$-orthogonal group $\UU{\KK}{p}$. (The paper actually uses $R= P^{\frac{1}{2}}$ to parametrize the space.) From our point of view, the ambient space is $\cE = \KK^{n\times p}\times \KK^{p\times p}$, and the tangent space is identified with the image of the operator $\rN_1$ from $\KK^{(n-p)\times p}\times \Herm{\ft}{\KK}{p}$ to $\cE$, $\rN_1: (B, D) \mapsto (Y_{\perp} B, D)$, where the matrix $(Y| Y_{\perp})$ is $\ft$-orthogonal. On the tangent space, \cite{BonSel} uses the metric $\Tr(BB^{\ft})+k\Tr(DP^{-1}DP^{-1})$ for a positive number $k$. The action of the group gives us the vertical vectors $(Y\rmq, P\rmq - \rmq P)$ for a $\ft$-antisymmetric matrix $\rmq$ such that $\rmq+\rmq^{\ft} = 0$. In the paper, the image of $\rN_1$ transverses but not orthogonal to the vertical vectors and no second-order method is provided. We modify this approach, using a Riemannian quotient metric to provide a second-order method. In the following, the projection to the horizontal space is denoted by $\Pi_{\cH}$. The horizontal lift of the Levi-Civita connection is $\Pi_{\cH}\nabla$, which we will denote by $\nabla^{\cH}$.
\begin{theorem}\label{prop:psd_conn} Let $\cM = \St{\KK}{p}{n}\times\Sd{\KK}{p}\subset \cE:= \KK^{n\times p}\times \KK^{p\times p}$ be an embedded ambient space of $\cM$. Identifying the manifold $\Sp{\KK}{p}{n}$ of positive-semidefinite matrices with $\cB = (\St{\KK}{p}{n}\times\Sd{\KK}{p})/\UU{\KK}{p}$, where the pair $S =(Y, P)$ represents the matrix $YPY^{\ft}$ with $Y\in\St{\KK}{p}{n}, P\in \Sd{\KK}{p}$, and the action of $U\in\UU{\KK}{p}$ sends $(Y, P)$ to $(YU, U^{\ft}PU)$. The self-adjoint metric operator
  \begin{equation}\label{eq:psd_metric}\sfg(S)(\omega_Y, \omega_P)=\sfg(\omega_Y, \omega_P)=(\alpha_0\omega_Y + (\alpha_1 - \alpha_0)YY^{\ft}\omega_P, \beta P^{-1}\omega_P P^{-1})\end{equation}
  for $\omega = (\omega_Y, \omega_p)\in \cE= \KK^{n\times p}\times \KK^{p\times p}$ defines the inner product $\TrR(\alpha_0\omega_Y^{\ft}\omega_Y + (\alpha_1 - \alpha_0)\omega_Y^{\ft}YY^{\ft}\omega_Y +\beta \omega_PP^{-1}\omega_P P^{-1})$ on $\cE$, which induces a metric on $\St{\KK}{p}{n}\times\Sd{\KK}{p}$, invariant under the action of $\UU{\KK}{p}$ and induces a quotient metric on $\Sp{\KK}{p}{n}$. Its tangent bundle  $T\Sp{\KK}{p}{n}$ lifts to the subbundle $\cH\subset T(\St{\KK}{p}{n}\times\Sd{\KK}{p})$ horizontal to the group action, where a vector $\eta=(\eta_Y, \eta_P)\in \cE$ is a horizontal tangent vector at $S = (Y, P)$ if and only if it satisfies:
  \begin{equation}
    \label{eq:sp_horizontal} \alpha_1Y^{\ft}\eta_Y +\beta \eta_PP^{-1} - \beta P^{-1}\eta_P=0
    \end{equation}
  $\cH_S = \cH_{(Y, P)}$ could be identified as the range of the one-to-one operator $\rN(S)$ from $\cE_{\rN} = \KK^{(n-p)\times p}\times \Herm{\ft}{\KK}{p}$ to $\cE$, mapping $(B, D)\in \cE_{\rN}$ to:
  \begin{equation}
\rN(S)(B, D) = (\rN_Y(B, D), \rN_P(B, D)) :=  (\beta Y(P^{-1}D - D P^{-1}) + Y_{\perp}B, \alpha_1 D)
\end{equation}
where $Y_{\perp}$ is orthogonal complement matrix to $Y$, $(Y| Y_{\perp})\in \UU{\KK}{n}$. The projection of $\omega = (\omega_Y, \omega_P)\in \cE=\KK^{n\times p}\times \KK^{p\times p}$ to the horizontal space $\cH_{(Y, P)}$  is given by
\begin{equation}
  \label{eq:psd_proj}
  \begin{gathered}
    \Pi_{\cH}(S)(\omega_Y, \omega_P) = (\beta Y(P^{-1}\cD - \cD P^{-1}) +\omega_Y-YY^{\ft}\omega_Y,\alpha_1\cD)\\
    \text{with }\cD = \cD(P) \omega := L(P)^{-1}\sym{\ft}(\omega_P + Y^{\ft}\omega_YP - PY^{\ft}\omega_Y)\\
    \text{where }L(P)X := (\alpha_1 - 2\beta) X + \beta (P X P^{-1} + P^{-1} X P)
    \end{gathered}
\end{equation}
The operator $L(P)$ could be inverted by \cref{prop:ex_lyapunov}. The Riemannian Hessian could be evaluated by \cref{eq:rhess11}, and the lift of the Levi-Civita connection is given by
\begin{equation}\label{eq:nabla_psd}
  \begin{gathered}
 \nabla^{\cH}_{\xi}\eta := \Pi_{\cH}\nabla_{\xi}\eta =\rD_{\xi}\eta -(\rD_{\xi}\Pi_{\cH})\eta + \Pi_{\cH}\sfg^{-1}\rK(\xi, \eta)\\
  \end{gathered}
\end{equation}
for horizontal lifts $\xi=(\xi_Y, \xi_P), \eta = (\eta_Y, \eta_P)$ of tangent vectors of $\Sp{\KK}{p}{n}$ with
\begin{equation}
  \label{eq:KSpd}
\begin{gathered}
\rK(\xi, \eta)_Y = \frac{\alpha_1 - \alpha_0}{2}(Y (\eta_Y^{\ft} \xi_Y+\xi_Y^{\ft}\eta_Y) -2(\eta_Y\xi_Y^{\ft}+\xi_Y\eta_Y^{\ft})Y)\\
\rK(\xi, \eta)_P = - \frac{\beta}{2}(P^{-1} \eta_{P} P^{-1} \xi_{P} P^{-1} +  P^{-1} \xi_{P} P^{-1} \eta_{P} P^{-1})
\end{gathered}
\end{equation}
Set $\rcD = (\rD_{\xi}\cD)\omega = L(P)^{-1}\{ \sym{\ft}( \xi_Y^{\ft}\omega_YP - P\xi_Y^{\ft}\omega_Y +Y^{\ft}\omega_Y\xi_P-\xi_PY^{\ft}\omega_Y)\\
- \beta(\xi_P \cD P^{-1} + P^{-1}\cD \xi_P - P \cD P^{-1}\xi_PP^{-1} - P^{-1}\xi_PP^{-1}\cD P)\}$, we have
\begin{equation}\label{eq:DH_psd}
  \begin{gathered}
(\rD_{\xi}\Pi_{\cH})\omega = (((\rD_{\xi}\Pi_{\cH})\omega)_Y,\alpha_1\rcD)\\
((\rD_{\xi}\Pi_{\cH})\omega)_Y = \beta \xi_Y(P^{-1}\cD - \cD P^{-1}) + \beta Y(P^{-1}\rcD - \rcD P^{-1} +\\
    \cD P^{-1}\xi_P P^{-1}-P^{-1}\xi_P P^{-1}\cD) -(\xi_Y Y^{\ft}+ Y\xi_Y^{\ft})\omega_Y
\end{gathered}    
\end{equation}  
\end{theorem}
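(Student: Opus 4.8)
The plan is to verify the assertions in the order stated, reducing each to the Stiefel computation of \cref{sec:stiefel}, the positive-definite computation of the preceding section, and the general machinery of \cref{prop:NJ}, \cref{prop:covar} and \cref{prop:ambAppl}. First I would note that $\sfg$ is the ``product'' of the Stiefel metric operator $\alpha_0 I+(\alpha_1-\alpha_0)YY^{\ft}$ of \cref{eq:stief_metric} on the first factor and $\beta$ times the affine-invariant operator $\omega_P\mapsto P^{-1}\omega_P P^{-1}$ on the second; each is self-adjoint positive-definite for $\alpha_0,\alpha_1,\beta>0$, hence so is $\sfg(S)$, and the displayed bilinear form is $\langle\cdot,\sfg(S)\cdot\rangle_{\cE}$. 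Invariance under $(Y,P)\mapsto(YU,U^{\ft}PU)$ follows by substituting into the three trace terms and using $U^{\ft}U=UU^{\ft}=I_p$, $(U^{\ft}PU)^{-1}=U^{\ft}P^{-1}U$ and cyclicity of $\TrR$; the action is free ($YU=Y$ with $Y$ of full column rank forces $U=I_p$) and proper ($\UU{\KK}{p}$ compact), so $\cB$ is a manifold, $\pi$ a Riemannian submersion, and \cref{prop:ambAppl}(2) then licenses \cref{eq:useGammaH} and \cref{eq:rhess11} on horizontal lifts. Differentiating the orbit map at $U=I_p$ along $\rmq\in\AHerm{\ft}{\KK}{p}$ gives the vertical vectors $(Y\rmq,\,P\rmq-\rmq P)$; applying $\sfg(S)$ to one (using $Y^{\ft}Y=I_p$) yields $(\alpha_1 Y\rmq,\ \beta(\rmq P^{-1}-P^{-1}\rmq))$, and pairing this under $\langle,\rangle_{\cE}$ against a tangent vector $\eta=(\eta_Y,\eta_P)$ of $\cM$ — with $Y^{\ft}\eta_Y$ skew and $\eta_P$ symmetric — and requiring vanishing for all skew $\rmq$ produces \cref{eq:sp_horizontal}.

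Next I would verify that $\rN(S)$ parametrizes $\cH_S$: its $Y$-block has $Y^{\ft}\rN_Y=\beta(P^{-1}D-DP^{-1})$, which is $\ft$-skew, and $\rN_P=\alpha_1 D$ is symmetric, so $\rN(S)(B,D)$ is tangent to $\cM$; substituting into \cref{eq:sp_horizontal} shows it is horizontal; injectivity is immediate ($\alpha_1 D=0\Rightarrow D=0$, then $Y_{\perp}B=0\Rightarrow B=0$); and the count $\dim\cE_{\rN}=(n-p)p+\dim\Herm{\ft}{\KK}{p}=\dim T\St{\KK}{p}{n}+\dim\Herm{\ft}{\KK}{p}-\dim\AHerm{\ft}{\KK}{p}=\dim\cH_S$ shows $\rN(S)$ is an isomorphism onto $\cH_S$. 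For the projection I would apply the $\rN$-form of \cref{prop:NJ}, $\Pi_{\cH}=\rN(\rN^{\ft}\sfg\rN)^{-1}\rN^{\ft}\sfg$: reading off $\rN^{\ft}$ from \cref{prop:adjoint} and composing shows that the block of $\rN^{\ft}\sfg\rN$ acting on $D$ is, after conjugating out the $P^{-1}$ factors, the operator $L(P)$, so inverting it gives \cref{eq:psd_proj}. (Equivalently, one checks directly that the stated expression lies in $\cH_S$ — automatic once $\cD\in\Herm{\ft}{\KK}{p}$, which holds since $L(P)$ preserves $\Herm{\ft}{\KK}{p}$ — and that $\omega-\Pi_{\cH}\omega$ is $\sfg$-orthogonal to $\mathrm{range}\,\rN(S)$; this orthogonality is exactly $L(P)\cD=\sym{\ft}(\omega_P+Y^{\ft}\omega_Y P-PY^{\ft}\omega_Y)$.) Invertibility of $L(P)$ is deferred to \cref{prop:ex_lyapunov}.

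Since $\sfg$ is a product, $\rD_{\xi}\sfg$ and the index-raised term $\cX(\xi,\eta)$ of \cref{eq:cX} split across the two factors, each piece equalling what was already found in \cref{sec:stiefel} and the positive-definite section; substituting into $\rK=\tfrac12((\rD_{\xi}\sfg)\eta+(\rD_{\eta}\sfg)\xi-\cX(\xi,\eta))$ and simplifying the $Y$-block with the tangency identity $\xi_Y^{\ft}(Y^{\ft}\eta_Y+\eta_Y^{\ft}Y)=0$ gives \cref{eq:KSpd}. With the steps above in hand, \cref{eq:nabla_psd} is just \cref{eq:useGammaH} with the pieces substituted, and the Riemannian Hessian claim is \cref{eq:rhess11} read off verbatim, so no further argument is needed there.

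The one genuinely long step — and the part I expect to be the main obstacle — is \cref{eq:DH_psd}, the formula for $\rD_{\xi}\Pi_{\cH}$. Here one differentiates \cref{eq:psd_proj} with the ambient argument $\omega$ held fixed: the $P$-block needs $\rcD=\rD_{\xi}\cD$, for which one uses $\rD_{\xi}(L(P)^{-1})=-L(P)^{-1}(\rD_{\xi}L(P))L(P)^{-1}$ with $\rD_{\xi}L(P)(X)=\beta(\xi_P X P^{-1}+P^{-1}X\xi_P-PXP^{-1}\xi_P P^{-1}-P^{-1}\xi_P P^{-1}XP)$, together with $\rD_{\xi}P^{-1}=-P^{-1}\xi_P P^{-1}$ inside the $\sym{\ft}(\cdots)$ argument; the $Y$-block additionally needs $\rD_{\xi}(YY^{\ft})=\xi_Y Y^{\ft}+Y\xi_Y^{\ft}$ and the product rule on $Y(P^{-1}\cD-\cD P^{-1})$. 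Collecting terms yields \cref{eq:DH_psd}. The difficulty is purely bookkeeping: treating $\omega$ (and later $\hatfY$ in the Hessian) as fixed elements of $\cE$, not dropping the $\rD_{\xi}(L(P)^{-1})$ contribution, and resisting the urge to cancel terms that $\Pi_{\cH}\sfg^{-1}$ later annihilates — those cancellations belong to the final assembly of \cref{eq:rhess11}, not to $\rD_{\xi}\Pi_{\cH}$ itself.
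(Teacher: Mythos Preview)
Your proposal is correct and follows essentially the same route as the paper: derive the horizontal condition from $\sfg$-orthogonality to the vertical vectors $(Y\rmq,\,P\rmq-\rmq P)$, verify that $\rN$ is injective onto $\cH_S$ by a dimension count, compute the projection via $\rN(\rN^{\ft}\sfg\rN)^{-1}\rN^{\ft}\sfg$ and recognize the $D$-block as (a $P^{-1}$-conjugate of) $L(P)$, inherit $\rK$ from the Stiefel and positive-definite factors, and obtain $\rD_{\xi}\Pi_{\cH}$ by differentiating \cref{eq:psd_proj}. The only cosmetic difference is that the paper gets $\rcD$ by differentiating the identity $L(P)\cD=\sym{\ft}(\cdots)$ and solving for $\rcD$, whereas you use $\rD_{\xi}(L(P)^{-1})=-L(P)^{-1}(\rD_{\xi}L(P))L(P)^{-1}$; these are the same product-rule computation rearranged.
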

Thus, the Hessian could be evaluated at $O(np^2)$-complexity, by operator composition using \cref{eq:rhess11}. Note that $(\rD_{\xi}\Pi_{\cH})\eta$ could be further simplified if $\eta=(\eta_Y, \eta_P)$ is a horizontal tangent vector, as $\cD = \frac{1}{\alpha_1}\eta_P$ in that case.
\begin{proof} We have $\TrR(\alpha_1\eta_Y^{\ft}Y\rmq+ \beta(\eta_P^{\ft}P^{-1}(P\rmq - \rmq P)P^{-1})) = 0$ for a tangent vector $(\eta_Y, \eta_P)$ and a $\ft$-antisymmetric matrix $\rmq$ from the horizontal condition. Using \cref{prop:raising} this means $\asym{\ft}(\alpha_1\eta_Y^{\ft}Y +\beta P^{-1}\eta^{\ft}_P - \beta\eta_P^{\ft} P^{-1}) = 0$. Using the fact that $\eta_P^{\ft}=\eta_P$ and $\eta_Y^{\ft}Y$ is $\ft$-antisymmetric, we have \cref{eq:sp_horizontal}.
  
  It is clear that $\rN(B, D)$ satisfies this equation and is one-to-one: if $\rN(B, D) = 0$ then immediately $D=0$, and $B=0$ since $Y_{\perp}^{\ft}Y_{\perp}=I$. It is onto the tangent space by a dimension count. The adjoint $\rN^{\ft} =(\rN^{\ft}_B,  \rN^{\ft}_D)$ has two components corresponding to the $B$ and $D$ factors. By \cref{prop:adjoint} we have
  $$\begin{gathered} \rN^{\ft}(\omega_Y, \omega_P)_B = Y_{\perp}^{\ft} \omega_{Y}\\
 \rN^{\ft}(\omega_Y, \omega_P)_D = \sym{\ft}(\alpha_{1}\omega_{P}  + \beta P^{-1} Y^{\ft} \omega_{Y} - \beta Y^{\ft} \omega_{Y} P^{-1})\\
  \rN^{\ft}\sfg(\omega_Y, \omega_P)_B = \alpha_{0} Y_{\perp}^{\ft} \omega_{Y}\\
\rN^{\ft}\sfg(\omega_Y, \omega_P)_D= \alpha_{1} \beta\sym{\ft}(P^{-1} \omega_{P} P^{-1} +  P^{-1} Y^{\ft}\omega_{Y}  -  Y^{\ft} \omega_{Y} P^{-1})\\  
(\rN^{\ft}\sfg\rN(B, D))_B = \alpha_{0} B\\
  \end{gathered}$$
  $$\begin{gathered}
    (\rN^{\ft}\sfg\rN(B, D))_D = \alpha_{1} \beta\sym{\ft}(P^{-1}(\alpha_1D) P^{-1} +  P^{-1} Y^{\ft} (\beta Y(P^{-1}D - D P^{-1}) + Y_{\perp}B)  -\\  Y^{\ft} (\beta Y(P^{-1}D - D P^{-1}) + Y_{\perp}B) P^{-1})\\
    = \alpha_{1}\beta \sym{\ft}(\alpha_1P^{-1}D P^{-1}+  \beta P^{-2}D - \beta P^{-1}D P^{-1}  -  \beta P^{-1}D  P^{-1} +\beta D P^{-2})\\
= \alpha_1\beta P^{-1}((\alpha_1 -2\beta) D + \beta P D P^{-1}  +  \beta P^{-1} D P)P^{-1}\\
= \alpha_1\beta P^{-1}L(P)DP^{-1}
  \end{gathered}$$
  Hence $(\rN^{\ft}\sfg\rN)^{-1}(B, D) = (\alpha_0^{-1}B, (\alpha_1\beta)^{-1}L(P)^{-1}(PDP))$ and $ (\rN^{\ft}\sfg\rN)^{-1}\rN^{\ft}\sfg(\omega_Y, \omega_P) = (Y_{\perp}^{\ft}\omega_Y, \cD)$. The projection formula \cref{eq:psd_proj} is just $\rN(\rN^{\ft}\sfg\rN)^{-1}\rN^{\ft}\sfg$. The formulas for $\rK$ follow from the corresponding Stiefel and positive-definite manifold formulas. For $\rD_{\xi} \Pi_{\cH}$, we take directional derivative of \cref{eq:psd_proj} using standard matrix calculus, the only difficulty is $\rcD = (\rD_{\xi}\cD)\omega$. We evaluate it by evaluating $\rD_{\xi}(L(P)\cD)\omega$ as
$$\begin{gathered}
  (\alpha_1 - 2\beta) (\rcD + \beta (P \rcD  P^{-1} + P^{-1} \rcD P) +
  \beta(\xi_P\cD P^{-1} + P^{-1}\cD \xi_P) -\\ \beta(P \cD P^{-1}\xi_PP^{-1} + P^{-1}\xi_PP^{-1}\cD P)
 = \sym{\ft}( \xi_Y^{\ft}\omega_YP - P\xi_Y^{\ft}\omega_Y +Y^{\ft}\omega_Y\xi_P-\xi_PY^{\ft}\omega_Y)
  \end{gathered}$$
by differentiating the equation for $\cD$. From here, we get the equation for $\rcD$.\qed
\end{proof}
To solve for $\cD$ and $\rcD$, we need an extension of the symmetric Lyapunov equation:
\begin{proposition}\label{prop:ex_lyapunov}
Let $P\in \Herm{\ft}{\KK}{p}$ be an $\ft$-symmetric matrix having the eigenvalue decomposition $P = U\Lambda U^{\ft}$ with eigenvalues $(\Lambda_i)_{i=1}^p$ and $UU^{\ft}=I$. Let the set of coefficients $(c_{st})_{-k\leq s, t \leq k}$ be such that
$M_{ij} := \sum_{s=-k, t=-k}^{s=k, t=k}c_{st}\Lambda^s_i \Lambda^t_j$ is not zero for all pairs ($\Lambda_i, \Lambda_j)$ of eigenvalues of $P$, then the equation
\begin{equation}
  \label{eq:eq_lyapunov}
  \sum_{s=-k, t=-k}^{s=k, t=k} c_{st}P^{s} X P^{t} = B
\end{equation}
has the following unique solution that could be computed at $O(p^3)$ complexity:
\begin{equation}
  \label{eq:ex_lyapunov}
  X = U\{(U^{\ft}BU) / M\} U^{\ft}
\end{equation}  
with $M=(M_{ij})_{i=1,j=1}^{i=p, j=p}$ and $/$ denotes the by-entry division. In particular, for a positive-definite matrix $P$ and  positive scalars $(\alpha$, $\beta)$, the equation $(\alpha - 2\beta)X + \beta(P^{-1}XP + PXP^{-1})= B$ has a unique solution with $M_{ij} = \alpha +\beta(\Lambda_i^{-1}\Lambda_j + \Lambda_i\Lambda_j^{-1}-2)$.
\end{proposition}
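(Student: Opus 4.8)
The plan is to diagonalize $P$ and thereby decouple \cref{eq:eq_lyapunov} into $p^2$ scalar equations. First I would apply the spectral theorem for $\ft$-symmetric matrices (real symmetric when $\KK=\R$, Hermitian when $\KK=\C$) to write $P = U\Lambda U^{\ft}$ with $UU^{\ft}=I_p$ and $\Lambda=\diag(\Lambda_1,\dots,\Lambda_p)$ real. Since $U$ is square, $U^{\ft}=U^{-1}$, so every power occurring in the sum satisfies $P^s = U\Lambda^s U^{\ft}$; for the negative powers this tacitly uses that the relevant eigenvalues do not vanish, which is forced by the requirement that the $M_{ij}$ be well defined and nonzero, and is automatic in the application where $P$ is positive-definite. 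Substituting into \cref{eq:eq_lyapunov} and multiplying on the left by $U^{\ft}$ and on the right by $U$ converts the equation to $\sum_{s,t} c_{st}\,\Lambda^s \widetilde X \Lambda^t = \widetilde B$, where $\widetilde X := U^{\ft}XU$ and $\widetilde B := U^{\ft}BU$.

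Next I would pass to entries. Because $\Lambda$ is diagonal, $(\Lambda^s \widetilde X \Lambda^t)_{ij} = \Lambda_i^{\,s}\,\widetilde X_{ij}\,\Lambda_j^{\,t}$, so the $(i,j)$ entry of the left-hand side is precisely $M_{ij}\widetilde X_{ij}$ with $M_{ij}=\sum_{s,t}c_{st}\Lambda_i^{\,s}\Lambda_j^{\,t}$ as in the statement. The matrix equation therefore decouples into the scalar equations $M_{ij}\widetilde X_{ij}=\widetilde B_{ij}$, each of which, under $M_{ij}\neq 0$, has the unique solution $\widetilde X_{ij}=\widetilde B_{ij}/M_{ij}$. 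This gives existence and uniqueness of $\widetilde X$, hence of $X = U\widetilde X U^{\ft}= U\big((U^{\ft}BU)/M\big)U^{\ft}$, which is \cref{eq:ex_lyapunov}. Equivalently, the linear map $X\mapsto\sum c_{st}P^s X P^t$ is conjugate via $X\mapsto U^{\ft}XU$ to the operator that multiplies a matrix entrywise by $M$, which is invertible exactly because no $M_{ij}$ is zero.

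For the complexity assertion I would observe that the eigendecomposition of a $p\times p$ Hermitian matrix and the two conjugations $B\mapsto U^{\ft}BU$ and $\widetilde X\mapsto U\widetilde X U^{\ft}$ (a bounded number of $p\times p$ matrix products) cost $O(p^3)$, while assembling $M$ --- a fixed number $(2k+1)^2$ of outer products of the vectors $(\Lambda_i^{\,s})_i$ --- and performing the entrywise division cost $O(p^2)$; with $k$ regarded as a constant the total is $O(p^3)$. Finally, the ``in particular'' claim is the special case $k=1$, $c_{0,0}=\alpha-2\beta$, $c_{-1,1}=c_{1,-1}=\beta$, all other $c_{st}=0$, for which $M_{ij}=(\alpha-2\beta)+\beta(\Lambda_i^{-1}\Lambda_j+\Lambda_i\Lambda_j^{-1})=\alpha+\beta(\Lambda_i^{-1}\Lambda_j+\Lambda_i\Lambda_j^{-1}-2)$; since $P$ is positive-definite we have $\Lambda_i,\Lambda_j>0$, and AM--GM gives $\Lambda_i^{-1}\Lambda_j+\Lambda_i\Lambda_j^{-1}\geq 2$, so $M_{ij}\geq\alpha>0$ and the nondegeneracy hypothesis holds automatically.

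There is no genuinely hard step here: once the eigenbasis is introduced, the argument reduces to a one-line entrywise computation. The only items requiring care are the bookkeeping about which powers of $P$ require invertibility (so that the $\Lambda_i^{\,s}$, and hence the $M_{ij}$, are defined) and phrasing the spectral-theorem reduction uniformly for $\KK\in\{\R,\C\}$; neither constitutes a real obstacle.
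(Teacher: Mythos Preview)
Your proof is correct and follows essentially the same approach as the paper: diagonalize $P$ via the spectral theorem, conjugate the equation by $U$ to reduce it to the entrywise system $M_{ij}\widetilde X_{ij}=\widetilde B_{ij}$, solve, and invoke AM--GM for the special case. Your write-up is slightly more detailed (e.g., remarking on the well-definedness of negative powers and itemizing the $O(p^3)$ cost), but there is no substantive difference.
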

\begin{proof} We follow the idea of \cite{BartelsStewart,BhatiaRos} but use the eigenvalue decomposition in place of the Schur decomposition. Substitute $P = U\Lambda U^{\ft}$ to \cref{eq:eq_lyapunov} and multiply $U^{\ft}$ and $U$ on the left-hand and right-hand sides of that equation we get $\sum_{s=-k, t=-k}^{s=k, t=k} c_{st}\Lambda^{s}U^{\ft} X U \Lambda^{t} = U^{\ft}B U$, which is equivalent to $(U^{\ft} X U)_{ij}M_{ij} = (U^{\ft}B U)_{ij}$ or $U^{\ft} X U = (U^{\ft}B U)/M$, and we have \cref{eq:ex_lyapunov}. If $\alpha$ and $\beta$ are positive then
  $\alpha_1 + \beta(\Lambda_i^{-1}\Lambda_j + \Lambda_i\Lambda_j^{-1}-2) >0$ by the $AGM$-inequality. The eigenvalue decomposition has $O(p^3)$ cost.\qed
\end{proof}
Horizontal lifts of geodesics on $\Sp{\KK}{p}{n}$ are geodesics on $\St{\KK}{p}{n}\times\Sd{\KK}{p}$. It is clear $\Sp{\KK}{p}{n}$ is a complete Riemannian manifold under this quotient metric, as both factors above are. If $(\eta_Y, \eta_P)$ is a horizontal tangent vector at $S=(Y, P)$, a horizontal geodesic $\gamma$ with $\gamma(0) = S, \dot{\gamma}(0) = (\eta_Y, \eta_P)$ is of the form $(Y(t), P^{1/2}\exp(t P^{-1/2}\eta_P P^{-1/2})P^{1/2})$, with $Y(t)$ is the geodesic of the metric in \cref{sec:stiefel}, described in \cite{NguyenGeodesic}.

\section{Implementation and numerical experiments} We developed a {\it Python} package based on {\it Pymanopt and Manopt} \cite{JMLR:v17:16-177,manopt} implementing the manifolds with metrics considered in this paper, for both the real and complex cases in the package \cite{Nguyen2020riemann}. The basic optimization algorithms are from \cite{JMLR:v17:16-177,manopt}, and the methods described in this paper provide the translation from ambient gradient and Hessian to the Riemannian ones. Besides the numerical implementation, we also include notebooks showing symbolic calculus results for each manifold, and scripts with numerical tests for the manifolds, including geodesics in most cases. We implemented the main formulas in \cref{prop:covar} and \cref{prop:rhess} as template methods in the base class {\it NullRangeManifold}. To use the method described here for a new manifold, it is convenient to {\it derive} from {\it NullRangeManifold}. Then, the user needs to implement the constraint operator $\JJ$, its transpose and derivative, the metric operator $\sfg$, its inverse, derivative, and the third Christoffel term $\cX$, as well as a method to solve $\JJ\sfg^{-1}\JJ^{\ft}$ (defaulted to use a conjugate-gradient solver otherwise). A retraction is also required. The template automatically provides the projection, Riemannian gradient, and Hessian derived in this paper.

We also implement real and complex Stiefel manifolds and positive-semidefinite manifolds with metric parameters. For each manifold, we provide a manifold class to support optimization problems based on the manifold. Using the method in this paper but with details appearing elsewhere, we also derive and implement optimization framework for fixed-rank matrix manifolds. For flag manifolds, we initially constructed a large family of metrics (including the metrics in \cref{sec:flag_manifold}) in \cite{Nguyen2020riemann}. We also implemented the metrics in \cref{sec:flag_manifold} separately in \cite{Nguyen2021riemannFlag}, which we use below. For testing, we numerically verify the projection satisfying the nullspace condition. We also tested metric compatibility and torsion-freeness of the covariance derivative and confirmed the relation between bilinear and operator Hessians. As the manifolds considered here are constructed from Stiefel or positive-definite matrix manifolds, both have effective retractions, we use these retractions in our implementation. As we would like to focus on methodology in this paper, we will not discuss formal numerical experiments. However, we have tested each manifold with a quadratic cost problem including matrices with one size of 1000 dimensions with a Trust-Region solver, which handles them comfortably.

For flag manifolds, we optimize the function $f(Y) = \TrR((Y\Lambda Y^{\ft}A)^2)$ over matrices $Y\in\St{\R}{d}{n}$. Here, $A$ is a positive-definite matrix, $d<n$ are two positive integers, $\bdh=(d_1,\cdots, d_q)$ is a partition of $d$ with $\sum_{i=1}^q d_i = d$, $\Lambda = \diag(\lambda_1 I_{d_1},\cdots, \lambda_q I_{d_q})$ for positive numbers $\lambda_1\cdots \lambda_q$. This cost function is invariant under the action of of $\UU{\R}{\bdh}$, thus could be considered as a function on the flag manifold $\St{\R}{d}{n}/\UU{\R}{\bdh}$. The Euclidean gradient is $4(AY\Lambda)Y^{\ft}(AY\Lambda)$, and the Euclidean Hessian is computed by routine matrix calculus. For testing, we consider $d=60, \bdh=(30, 20, 10), n=1000$ and use a trust-region solver. In this case, there is no noticeable variation in time when different values of $\alpha$ are used, typically a few seconds on a free colab engine in the notebook \href{https://github.com/dnguyend/SimpleFlag/blob/main/colab/SimpleFlag.ipynb}{colab/SimpleFlag.ipynb} in \cite{Nguyen2021riemannFlag}. Convergence is achieved after typically $16$ trust-region iterations, with small alpha requiring more outer iterations. The convergence is superlinear as seen in \cref{fig:FlagOpt}.
\begin{figure}
  \centering
\includegraphics[scale=0.4]{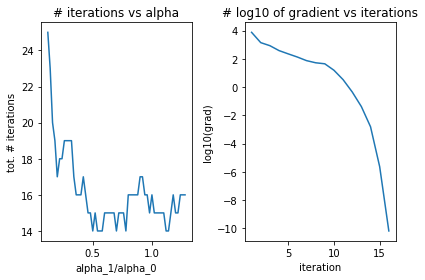}
\caption{Optimization on flag manifold $\Flag(30, 20, 10; 1000, \R)$ using a trust-region solver. Left, number of total iterations versus $\alpha=\alpha_1/\alpha_0$. Right, $\log_{10}$ of gradient for $\alpha=1$}
\label{fig:FlagOpt}
\end{figure}

For another test of the Riemannian optimization framework, we consider a nonlinear weighted PCA (principal component analysis) problem, which could be solved by optimizing over the positive-semidefinite matrix manifold. Given a symmetric matrix $A\in\Herm{\sfT}{\R}{n}$ and a weight vector $W\in\R^n$ we want to minimize the cost function
$$\Tr(A-YPY^{\sfT})\diag(W)(A-YPY^{\sfT})=\Tr W_d(A^2-AYPY^{\sfT}-YPY^{\sfT}A + YP^2Y^{\sfT}) $$
of a positive-semidefinite matrix $S = YPY^{\sfT}\in\Sp{\R}{p}{n}$, with $Y\in\St{\R}{p}{n}, P\in \Sd{\R}{p}$. Here, $W_d$ denotes the diagonal matrix $\diag(W)$. When $W$ has identical weight $\lambda$, $W_d = \lambda I_n$, expanding the cost function, we need to minimize $\Tr P^2 - 2\Tr Y^T A Y P$ in $Y$ and $P$, which implies $P = Y^T A Y$. Thus, the problem is optimizing $ -\Tr (Y^{\sfT}AY)^2$ over the Stiefel manifold (actually over the Grassmann manifold as the function is invariant when $Y$ is multiplied on the right by an orthogonal matrix), which could be considered as a quadratic PCA problem. When $W$ has non identical weights, it is difficult to reduce the problem to a Stiefel manifold, hence we optimize over the positive-semidefinite manifold, with a trust-region solver.

The cost function from $\cM = \St{\R}{p}{n}, \times \Sd{\R}{p}$ extends to $\cE=\R^{n\times p}\times \R^{p\times p}$, and is denoted by $\hatf(Y, P)$. For a horizontal tangent vector $\xi = (\xi_Y, \xi_P)$ at $(Y, P)\in\cM$
$$\begin{gathered}(\rD_{\xi}\hatf)(Y, P) = \Tr W_d(-A\xi_YPY^{\sfT} - AY\xi_PY^{\sfT} -AYP\xi_Y^{\sfT}\\
-\xi_YPY^{\sfT}A -Y\xi_PY^{\sfT}A -YP\xi_Y^{\sfT}A
+ \xi_YP^2Y^{\sfT} + Y(\xi_PP + P\xi_P)Y^{\sfT} + YP^2\xi_Y^{\sfT}) \end{gathered}$$
We have $\Tr W_d(-A\xi_YPY^{\sfT}  -AYP\xi_Y^{\sfT} -\xi_YPY^{\sfT}A -YP\xi_Y^{\sfT}A) =-2\Tr(AW_d + W_dA)YP\xi_Y^{\sfT}$, $\Tr W_d(\xi_YP^2Y^{\sfT} + YP^2\xi_Y^{\sfT}) =2\Tr W_dYP^2\xi_Y^{\sfT}$ and similar equalities for $\xi_P$ give us
$$\hgradf= (-4\sym{\sfT}(A W_d)YP + 2W_dYP^2, -2\sym{\sfT}(Y^{\sfT}W_d(AY - Y P)))$$
The ambient Hessian $\hhessf(\xi)$ follows from a directional derivative calculation
$$\begin{gathered}\hhessf(\xi) = -4\sym{\sfT}(AW_d)(\xi_YP + Y\xi_P) +
            2W_d\xi_YP^2 + 2W_dY(\xi_PP + P\xi_P),\\
            -2\sym{\sfT}(\xi_Y^{\sfT}W_d(AY - Y  P))
            - 2 \sym{\sfT}(Y^{\sfT}W_d(A\xi_Y - \xi_Y  P - Y \xi_P)))
\end{gathered}$$
The Riemannian gradient is computed as $\Pi_{\cH, \sfg}\sfg^{-1}\hgradf$, with $\Pi_{\cH, \sfg}$ is given by \cref{eq:psd_proj}, the Riemannian Hessian is computed from \cref{eq:rhess11}, with all components given in \cref{prop:psd_conn}. We use the built-in trust region solver in \cite{JMLR:v17:16-177}.

In our experiment (implemented in the notebook \href{https://github.com/dnguyend/ManNullRange/blob/master/colab/WeightedPCA.ipynb}{colab/WeightedPCA.ipynb} in \cite{Nguyen2020riemann}), we take $n= 1000, p = 50$, with $A$ and $W$ generated randomly. To find the optimum $S=YPY^{\sfT}$, we optimize with $\alpha_0 = \alpha_1 = 1$, with $\beta$ is $0.1$ for the first $20$ iterations, $\beta=10$ for the next $20$ and $\beta=30$ for the remaining iterations. This choice of $\beta$ comes from our limited experiments, we find varying $\beta$ has a strong effect on the speed of convergence, and updating $\beta$ as such gives better convergence rates than a static $\beta$. Philosophically, the small starting $\beta$ could be thought of as focusing first on aligning the subspace. The convergence graph is summarized in \cref{fig:WeightedPCA}. We hope to revisit the topic with a more systematic study in future works.
\begin{figure}
  \centering
\includegraphics[scale=0.4]{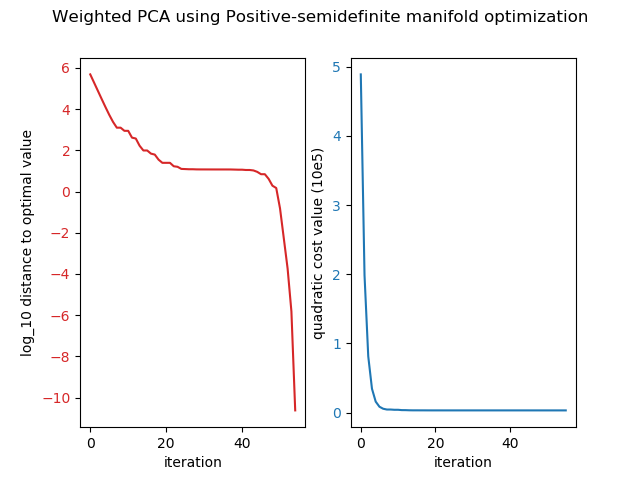}
\caption{Weighted PCA using Positive-semidefinite manifold optimization with a trust-region solver. Left, $\log_{10}$ of distance to the optimal value. Right, quadratic cost by iteration.}
\label{fig:WeightedPCA}
\end{figure}
\section{Conclusion} In this paper, we have proposed a framework to compute the Riemannian gradient, Levi-Civita connection, and the Riemannian Hessian effectively when the constraints, the symmetry of the problem, and the metrics are given analytically and have applied the framework to several manifolds important in applications. We look to apply the results in this paper to several problems in optimization and statistical learning. The optimization platform for positive-semidefinite matrices should help to learn sparse plus low-rank probability densities in statistical problems. We hope the research community will find the method useful in future works.
\bibliographystyle{amsplain}
\bibliography{RiemannianSymbolic}
\end{document}